\tikzset{join/.code=\tikzset{after node path={%
\ifx\tikzchainprevious\pgfutil@empty\else(\tikzchainprevious)%
edge[every join]#1(\tikzchaincurrent)\fi}}}
\tikzset{>=stealth',every on chain/.append style={join},
         every join/.style={->}}
\newtheorem{definition}{Definition}[section]
\newtheorem{theorem}[definition]{Theorem}
\newtheorem{lemma}[definition]{Lemma}
\newtheorem{corollary}[definition]{Corollary}
\newtheorem{proposition}[definition]{Proposition}
\theoremstyle{definition}
\newtheorem{remark}[definition]{Remark}
\newtheorem{example}[definition]{Example}
\newcommand{\noi}{\noindent}
\newcommand{\ra}{\rightarrow}
\begin{document}

\title[Analysis on Semihypergroups]{Analysis on Semihypergroups: Function Spaces, Homomorphisms and Ideals}

\author[C.~Bandyopadhyay]{Choiti Bandyopadhyay}

\address{Department of Mathematical and Statistical Sciences, University of Alberta, Canada T6G 2G1}
\email{choiti@ualberta.ca}

\thanks{This work is part of the author's PhD thesis at the University of Alberta}
\keywords{semihypergroups, hypergroups, ideals, homomorphisms, kernel, almost periodic functions, weakly almost periodic functions, free products}
\subjclass[2010]{Primary 43A62, 43A60, 43A10, 43A99, 20E06; Secondary 46G12, 46J20, 46E27}

\begin{abstract}
The main purpose of this article is to initiate a systematic study of Semihypergroups, first introduced by C. Dunkl \cite{DU}, I. Jewett \cite{JE} and R. Spector \cite{SP}  independently around 1972. We introduce and study several natural algebraic and analytic structures on semihypergroups, which are well-known in the case of topological groups and semigroups. In particular, we first study almost periodic and weakly almost periodic function spaces (basic properties, their relation to the compactness of the underlying space, introversion and Arens product on their duals among others). We then introduce homomorphisms and ideals, and thereby examine their behaviour (basic properties, structure of the kernel and relation of amenability to minimal ideals) in order to gain insight into the structure of a Semihypergroup itself. In the process, we further investigate where and why this theory deviates from the classical theory of semigroups.
\end{abstract}

\maketitle


\section{Introduction}

\quad The theory of topological groups and semigroups have been an important area of research in mathematics, specially from early 1960's. But in practice, one often comes across certain objects arising from groups (coset and double-coset spaces, orbit spaces \textit{etc.} for example), which although have a structure somewhat similar to groups, are not exactly groups.

\vspace{0.03in}

\quad Consider the double-coset space $GL_n(\mathbb{R}) // O(n)$ for example. Although it retains some group-like structures from $GL_n(\mathbb{R})$, it fails to be a group anymore. In fact, given any locally compact group $G$ and a compact subgroup $H$ of it, the double-coset space $G//H$ fails to be a group, hence forcing us to treat it individually whenever we come across these kinds of objects in research. We experience similar difficulties while investigating the orbit space of a continuous affine action of a compact group on a locally compact group (more discussion on this will follow later) as well. 

\vspace{0.03in}

\quad Around 1972, three mathematicians, C. Dunkl \cite{DU}, I. Jewett \cite{JE} and R. Spector \cite{SP} independently came up with a unified theory involving convolution of measures, in pursuit of moulding these kinds of objects into an unified class of objects called Hypergroups.

\vspace{0.03in}

\quad A hypergroup may essentially be seen as a generalized form of a locally compact group. Only here, the product of two points is a probability measure, rather than a single point; and as expected, it contains an identity and a certain `involution', which is an interesting generalization for inverses in the group theory \cite{JE}.

\vspace{0.03in}

\quad It turns out that all the double-coset spaces and the orbit spaces where the individual mappings $g\mapsto h.g: G\ra G$ are automorphisms, can be given a natural hypergroup-structure; and hence their properties can directly be studied and derived from the study of the unified theory of hypergroups. Since the initiation of systematic studies on it, there has been rapid progress in the research on several areas involving hypergroups, including the area of commutative and weighted hypergroups, amenability and several function spaces and algebras on it.

\vspace{0.03in}

\quad The next question is, what happens when the orbit spaces of continuous affine actions do not permit all the individual maps $g\mapsto h.g : G \ra G$ to be automorphisms. For example, what happens when instead of double-coset spaces, we encounter homogeneous spaces or simply left-coset spaces like $GL_n(\mathbb{R})/O_n(\mathbb{R})$, $GL_n(\mathbb{R})/SO_n(\mathbb{R})$, $S_4/D_8$, $S_4/S_3$ or for that matter, any left-coset space $G/H$, $G$ being any locally compact group and $H$ being a compact non-normal subgroup of it. It is apparent that these spaces, although retaining some group-like structures, are not topological groups. More importantly, none of these spaces can even be given a hypergroup-structure (reasons explained in the third section). Hence these kinds of objects that frequently appear while studying the classical theory of topological groups, fall out of the parent category and can not quite be studied or analysed with the existing general theory for topological groups and hypergroups.

\vspace{0.03in}   

\quad The solution to this problem can be attained through a detailed study of a more general category of objects involving convolution algebra of measures, called Semihypergroups. They were introduced at the same time as hypergroups \cite{DU, JE, SP} and in a sense, served as building blocks for hypergroup-axioms. A semihypergroup, as one would expect, can also be seen as a generalized form of a locally compact semigroup, where the product of two points is a certain probability measure, rather than a single point. But unlike hypergroups, it does not need to have an identity or involution. Hence in a nutshell, a semihypergroup is essentially a locally compact topological space where the measure space is equipped with a certain convolution product, turning it into an associative algebra; whereas in the case of a hypergroup, the measure algebra is also equipped with an identity and an involution.

\vspace{0.03in} 

\quad It can be seen (details in the third section) that all the orbit spaces and coset spaces discussed above can naturally be given a semihypergroup structure, though not a hypergroup structure. The fact that semihypergroups contain more generalized objects arising from different fields of research than classical group and hypergroup theory and yet sustains enough structure to allow an independent theory to develop, makes it an intriguing and useful area of study with essentially a broader area of applications. 

\vspace{0.03in}

\quad Unlike hypergroups, there is a severe lack of any extensive prior research on Semihypergroups since its inception. The significant examples it contains, opens up a number of new intriguing paths of research on semihypergroups. The lack of an algebraic structure on the underlying space poses a serious challenge in extending results from semigroups to semihypergroups. Also unlike hypergroups, the  fact that a semihypergroup structure lacks the existence of a Haar measure or an involution in its measure algebra, creates a serious obstacle to the extension of most of the important group and hypergroup theories and ideas naturally to semihypergroups.

\vspace{0.03in}
 
\quad Hence the main motivation of this paper is to initiate and develop a systematic study of semihypergroups. We will base our work on semihypergroups on Jewett's  definition of `semiconvos' \cite{JE}. In this particular paper, we will restrict ourselves to the discussion of some particular functions spaces, ideals and homomorphisms of a semihypergroup. The rest of the paper is designed as below.

\vspace{0.03in}

\quad In the next, \textit{i.e}, second section of this article, we recall some preliminary definitions and notations given by Jewett in \cite{JE}, and introduce some new definitions required for further work. In the third section, we list some important useful examples of semihypergroups and hypergroups.

\vspace{0.03in}

\quad In our fourth section we discuss two of the most important function spaces on semihypergroups, namely the spaces of almost periodic and weakly almost periodic functions. Here we explore the relation between the left and right counterparts of these function spaces (Theorem \ref{wapl=r}, Corollary \ref{apl=r}) and show how the compactness of the underlying space influences amenability and the structure of these function spaces, depending on the kind of continuity we have on the measure algebra of the underlying space (Proposition \ref{sh1}, Theorems \ref{sh2}, \ref{amen1}). We then introduce introversion on the function spaces and conclude the section with examining Arens regularity on the duals of these function spaces (Corollary \ref{introcor}, Theorem \ref{intro00}).

\vspace{0.03in}

\quad In the fifth section, we deal with the inner structure of a semihypergroup itself. We introduce the concept of an ideal in (semitopological) semihypergroups and explore some of its basic properties as well as its relation to a more general form of homomorphism between semihypergroups. Furthermore, we investigate the structure of the kernel of a compact (semitopological) semihypergroup (Theorem \ref{ithm1}), giving us insight into the algebraic structure of a semihypergroup. The discussion also allows us insight into where exactly this theory starts deviating from the classical theory of topological semigroups (for example, Remark \ref{idealdevi}). Finally, we conclude the section with exploring the connection between minimal left ideals and amenability on a compact semihypergroup (Theorem \ref{ithm3}). 

\vspace{0.03in}

\quad Finally, this article is the initiation of a systematic study of this class of objects with intriguing scope for application in the theory of coset and orbit spaces of locally compact groups, homogeneous spaces and Lie groups. We conclude with some potential problems and areas which we intend to work on and explore further in near future, in addition to introducing and investigating similar structures as in \cite{FO1}, \cite{FO2}, \cite{FO3}, \cite{GH1}, \cite{GH2}, \cite{GH3}, \cite{LA2}, \cite{LDS}, \cite{NA}, \cite{WI1}, \cite{WI2} for the case of semihypergroups and explore further where and why the theory deviates from the classical theory of topological semigroups and groups.
\section{Notations and Definitions}
\noi Here we first list some of the preliminary set of basic notations that we will use throughout, followed by a brief introduction to the tools and concepts needed for the formal definition of a semihypergroup \cite{JE}. All the topologies throughout this article are assumed to be Hausdorff, unless otherwise specified.

\vspace{0.03in}

\noi For any locally compact Hausdorff topological space $X$, we denote by $M(X)$ the space of all regular complex Borel measures on $X$, where $M^+(X)$ and $P_c(X)$ denote the subsets of $M(X)$ consisting of all finite non-negative regular Borel measures on $X$ and all probability measures with compact support on $X$ respectively. Moreover, $B(X), \mathcal{B}(X), C(X)$ and $C_c^+(X)$ denote the function spaces of all bounded functions, Borel measurable functions, bounded continuous functions and non-negative compactly supported continuous functions on $X$ respectively.

\vspace{0.03in}

\noi Next, we introduce two very important topologies on the positive measure space and the space of compact subsets for any locally compact topological space $X$. Unless mentioned otherwise, we will always assume these two topologies on the respective spaces.

\vspace{0.03in}

The \textbf{cone topology} on $M^+(X)$ is defined as the weakest topology on $M^+(X)$ for which the maps $ \mu \mapsto \int_X f \ d\mu$ is continuous for any $f \in C_c^+(X)\cup \{1_X\}$ where $1_X$ denotes the characteristic function of $X$. Note that if $X$ is compact then it follows immediately from the Riesz representation theorem that the cone topology coincides with the weak*-topology on $M^+(X)$ in this case.

\vspace{0.03in}

We denote by $\mathfrak{C}(X)$ the set of all compact subsets of $X$. The \textbf{Michael topology} on $\mathfrak{C}(X)$ is defined to be the topology generated by the sub-basis $\{\mathcal{C}_U(V) : U,V \textrm{ are open sets in } X\}$, where
$$\mathcal{C}_U(V) = \{ C\in \mathfrak{C}(X) : C\cap U \neq \O, C\subset V\}.$$

\noi Note that $\mathfrak{C}(X)$ actually becomes a locally compact Hausdorff space with respect to this natural topology. Moreover if $X$ is compact then $\mathfrak{C}(X)$ is also compact. 

\vspace{0.03in}

For any locally compact Hausdorff space $X$ and any element $x\in X$, we denote by $p_x$ the point-mass measure or the Dirac measure at the point $\{x\}$.

\vspace{0.03in}

For any three locally compact Hausdorff spaces $X, Y, Z$, a bilinear map $\Psi : M(X) \times M(Y) \rightarrow M(Z)$ is called \textbf{positive continuous} if the following holds :
\begin{enumerate}
\item $\Psi(\mu, \nu) \in M^+(Z)$ whenever $\mu \in M^+(X), \nu \in M^+(Y)$.
\item The map $\Psi |_{M^+(X) \times M^+(Y)}$ is continuous.
\end{enumerate}

\vspace{0.03in}

\noi Now we are ready to state the formal definition for a semihypergroup. Note that we follow Jewett's notion \cite{JE} in terms of the definitions and notations, in most cases.

\begin{definition}\label{shyper}\textbf{(Semihypergroup)} A pair $(K,*)$ is called a (topological) semihypergroup if it satisfies the following properties:


\begin{description} 
\item[(A1)] $K$ is a locally compact Hausdorff space and $*$ defines a binary operation on $M(K)$ such that $(M(K), *)$ becomes an associative algebra.

\item[(A2)] The bilinear mapping $* : M(K) \times M(K) \rightarrow M(K)$ is positive continuous.

\item[(A3)] For any $x, y \in K$ the measure $p_x * p_y$ is a probability measure with compact support.

\item[(A4)] The map $(x, y) \mapsto \mbox{ supp}(p_x * p_y)$ from $K\times K$ into $\mathfrak{C}(K)$ is continuous.
\end{description}
\end{definition}

\noi A semihypergroup $(K, *)$ is called a hypergroup if $K$ admits an identity element $e \in K$ and a certain involution map (analogous to inverses in groups). Note that for any $A,B \subset K$ the convolution of subsets is defined as the following:
$$A*B := \cup_{x\in A, y\in B} \ supp(p_x*p_y) $$

\noi  Now we conclude this section by introducing the concept of left (or right) topological and semitopological semihypergroups, similar to the concepts of left (or right) topological and semitopological semigroups.

\begin{definition}
A pair $(K, *)$ is called a left topological semihypergroup if it satisfies all the conditions of Definition \ref{shyper} with property \textbf{(A$2$)} replaced by the following:
\begin{description}
\item[(A2$'$)] The map $(\mu, \nu) \mapsto \mu*\nu$ is positive and for each $\omega \in M^+(K)$ the map $L_\omega:M^+(K) \rightarrow M^+(K)$ given by $L_\omega(\mu)= \omega*\mu$ is continuous.
\end{description}
\end{definition}

\begin{definition}
A pair $(K, *)$ is called a right topological semihypergroup if it satisfies all the conditions of Definition \ref{shyper} with property \textbf{(A$2$)} replaced by the following:
\begin{description}
\item[(A2$''$)] The map $(\mu, \nu) \mapsto \mu*\nu$ is positive and for each $\omega \in M^+(K)$ the map $R_\omega:M^+(K) \rightarrow M^+(K)$ given by $R_\omega(\mu)= \mu*\omega$ is continuous.
\end{description}
\end{definition}

\begin{definition}
A pair $(K, *)$ is called a \textbf{semitopological semihypergroup} if it is both left and right topological semihypergroup.
\end{definition}


\section{Examples}

\noi In this section, we list some well known examples \cite{JE,ZE} of semihypergroups and hypergroups. Thus we explore how the shortcomings explained in the introduction are overcome by the category of semihypergroups and hypergroups, as well as why most of the structures discussed there, although they attain semihypergroup structures, fail to be hypergroups.

\begin{example} \label{extr}
If $(S, \cdot)$ is a locally compact topological semigroup, then $(S, *)$ is a semihypergroup where $p_x*p_y = p_{x.y}$ for any $x, y \in S$.

\vspace{0.05cm}

\noi Similarly, if $(G, \cdot)$ is a locally compact topological group, then $(G, *)$ is a hypergroup with the same bilinear operation $*$, identity element $e$ where $e$ is the identity of $G$ and the involution on $G$ defined as $x \mapsto x^{-1}$.
\end{example}

\begin{example} \label{ex2}
Take $T = \{e, a, b\}$ and equip it with the discrete topology. Define
\begin{eqnarray*}
p_e*p_a &=& p_a*p_e \ = \ p_a\\
p_e*p_b &=& p_b*p_e \ = \ p_b\\
p_a*p_b &=& p_b*p_a \ = \ z_1p_a + z_2p_b\\
p_a*p_a &=& x_1p_e + x_2p_a + x_3p_b \\
p_b*p_b &=& y_1p_e + y_2p_a + y_3p_b
\end{eqnarray*}

\noi where $x_i, y_i, z_i \in \mathbb{R}$ such that $x_1+x_2+x_3 = y_1+y_2+y_3 = z_1+z_2 = 1$ and $y_1x_3 = z_1x_1$. Then $(T, *)$ is a commutative hypergroup with identity $e$ and the identity function on $T$ taken as involution.
\end{example}

\begin{example} \label{ex3}
Let $G$ be a locally compact topological group and $H$ be a compact subgroup of $G$. Also, let $\mu$ be the normalized Haar measure of $H$. Consider the left quotient space $S := G/H = \{xH : x \in G\}$ and equip it with the quotient topology. For any $x, y \in G$, define
$$p_{xH} * p_{yH} = \int_H p_{(xty)H} \ d\mu(t) .$$
\noi Then $(S, *)$ is a semihypergroup.

\noi For instance, take $G$ to be the symmetric group $S_4$ and take $H$ to be the dihedral group $D_8$. We know that $D_8$ is not a normal subgroup of $S_4$. Consider the left coset space 
$$G/H = \{H, s_1H, s_2H\}$$
where $s_1=(1 2 4)$ and $s_2=(1 4 2)$. Then the above formulation gives us that
$$p_{xH} * p_{yH} = \frac{1}{8} \sum_{h\in H} p_{(xty)H} $$
\noi Hence a direct computation gives us that the left coset space $S_4/D_8$ is a discrete semihypergroup where the convolution is given by the following table:
\begin{center}
  \begin{tabular}{ c | c  c  c }
    $*$ & $p_H$ & $p_{s_1H}$ & $p_{s_2H}$ \\
    \hline
    $p_{H}$ & $p_H$ & $\frac{1}{2}(p_{s_1H}+p_{s_2H})$ & $\frac{1}{2}(p_{s_1H}+p_{s_2H})$ \\ 
    $p_{s_1H}$ & $p_{s_1H}$ & $\frac{1}{2}(p_{H}+p_{s_2H})$ & $\frac{1}{2}(p_{H}+p_{s_1H})$ \\
    $p_{s_2H}$ & $p_{s_2H}$ & $\frac{1}{2}(p_{H}+p_{s_1H})$ & $\frac{1}{2}(p_{H}+p_{s_2H})$ \\
  \end{tabular}
\end{center}

\vspace{0.09cm}

Note that here the element $x=H$ fails to be a right identity, and hence $S_4/D_8$ does not attain a hypergroup structure. Thus in general,  for a left coset space $G/H$ where $H$ is not normal, $H$ fails to serve as a right identity.

\end{example}

The next example \cite{JE} of double-coset spaces  overcomes this barrier and becomes a hypergroup using a similar convolution product.


\begin{example} \label{ex5}
Consider $G$ and $H$ as in the previous example and equip the space of double cosets \\$K := G/ /H = \{HxH : x \in G\}$ with the usual quotient topology. For any $x, y \in G$, define
$$p_{HxH} * p_{HyH} = \int_H p_{H(xty)H} \ d\mu(t) .$$
\noi Then $(K, *)$ is a hypergroup with the identity element $e = H$ and involution function $HxH \mapsto Hx^{-1}H$.
\end{example}

Finally, the next example of orbit spaces includes \cite{JE} both Examples \ref{ex3}, \ref{ex5}. Recall that a continuous action of a topological group $T$ on a Hausdorff space $X$ is a continuous map $(t, x) \mapsto x^t : T\times X \ra X$ such that $x^e=x$ and $x^{st} = (x^s)^t$ for each $x\in X$, $s, t\in T$.

\begin{example} \label{ex4}
Let $G$ be a locally compact topological group and $H$ be any compact group. Recall that a map $\phi: G\ra G$ is called affine if there exists a scaler $\alpha$ and an automorphism $\Psi$ of $G$ such that $\phi(x) = \alpha \Psi(x)$ for each $x\in G$.

\vspace{0.05cm}

\noi A continuous action $\pi$ of $H$ on $G$ given by $\pi(h, g) = g^h$ for each $g\in G, h\in H$ is called a continuous affine action if the map $g\mapsto g^h: G\ra G$ is affine for each $h\in H$. For any continuous affine action $\pi$ of $H$ on $G$, consider the orbit space
$$ \mathcal{O} := \{x^H : x \in G\},$$
where $x^H = \mathcal{O}(x) = \{\pi(h, x): h\in H\}$. 

\vspace{0.05cm}

\noi Let $\sigma$ be the normalized Haar measure of $H$. Consider $\mathcal{O}$ with the quotient topology and the following convolution
$$ p_{x^H} * p_{y^H} := \int_H \int_H p_{(\pi(s, x)\pi(t, y))^H} \ d\sigma(s) d\sigma(t) .$$
\noi Then $(\mathcal{O}, *)$ becomes a semihypergroup.
\end{example}

\section{Almost Periodic And Weakly Almost Periodic Functions}

\noi In this section, we start with testing some basic properties of the spaces of almost periodic and weakly almost periodic functions for a (semitopological) semihypergroup, and investigate the relation between these spaces and compactness of the underlying space. Finally, we conclude the section with examining Arens regularity \cite{AR} on the dual of the space of almost periodic functions.

\vspace{0.05cm}

\noi Recall \cite{JE} that for any measurable function $f$ on a (semitopological) semihypergroup $K$ and each $x, y \in K$ we define the left and right translates of $f$ (denoted as $L_xf$ and $R_xf$ respectively) as follows.
$$ L_xf(y) = R_yf(x) = f(x*y) = \int_K f \ d(p_x*p_y)\ $$

For any function $f\in C(K)$ we define the right orbit $\mathcal{O}_r(f)$ of $f$ as
$$\mathcal{O}_r(f) := \{R_xf : x\in K\} .$$

A function $f\in C(K)$ is called right almost periodic if $\mathcal{O}_r(f)$ is relatively compact in $C(K)$ with respect to the norm topology. Similarly, a function $f \in C(K)$ is called right weakly almost periodic if $\mathcal{O}_r(f)$ is relatively compact in $C(K)$ with respect to the weak topology on $C(K)$.

We denote these two classes of functions as:
\begin{eqnarray*}
AP_r(K) &:=&  \mbox{Space of all right almost periodic functions on } K.\\
WAP_r(K) &:=& \mbox{Space of all right weakly almost periodic functions on } K.
\end{eqnarray*}

\noi Similarly we can define the left orbit $\mathcal{O}_l(f)$ of a function $f\in C(K)$ and hence define the set of all left almost periodic functions $AP_l(K)$ and left weakly almost periodic functions $WAP_l(K)$. In most instances, the results we prove here for the right-case also hold true for the left-case.

\vspace{0.03in}

A function $f\in C(K)$ is called left(right) uniformly continuous if the map $x\mapsto L_xf$ ($x\mapsto R_xf$) from $K$ to $C(K)$ is continuous. A function $f$ is called uniformly continuous if it is both left and right uniformly continuous. The space consisting of  such functions is denoted by $UC(K)$.

\vspace{0.03in}

Let $K$ be a (semitopological) semihypergroup with identity and $\mathcal{F}$ a linear subspace of $C(K)$ containing constant functions. A function $m\in \mathcal{F}^*$ is called a mean of $\mathcal{F}$ if we have that $||m|| = 1 = m(1)$. If $\mathcal{F}$ is a translation-invariant linear subspace of $C(K)$ containing constant functions, a mean $m$ of $\mathcal{F}$ is called a left invariant mean (LIM) if $m(L_xf) = m(f)$ for any $x\in K$, $f\in \mathcal{F}$. We can similarly define a right invariant mean (RIM) for $\mathcal{F}$. We say that a (semitopological) semihypergroup $K$ is left (resp. right) amenable if $C(K)$ admits a LIM (resp. RIM).

\vspace{0.03in}

\noi Now we examine the relation between the spaces of left and right (weakly) almost periodic functions. But before we  prove the first result, let us first recall an important result on the existence of a vector integral on the function space of a general locally compact Hausdorff space, proved in \cite{RU}.

\begin{theorem} \label{RU1}
Suppose that $(X,\tau)$ is a Hausdorff topological vector space where $X^*$ separates points, and $\lambda \in P_c(Y)$ where $Y$ is a locally compact Hausdorff space. If a function $F:Y\ra X$ is continuous and $\overline{co}(F(Y))$ is compact in $X$ then the vector integral $\omega:= \int_Y F\ d\lambda$ exists and $\omega \in \overline{co}(F(Y))$.
\end{theorem}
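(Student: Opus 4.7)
The plan is to define $\omega$ through its values under continuous linear functionals, and then show such a point exists inside the compact convex set $K := \overline{co}(F(Y))$ by a Hahn--Banach separation plus finite-intersection argument. Since $X^*$ separates points, any $\omega \in X$ satisfying $\phi(\omega) = \int_Y \phi \circ F \, d\lambda$ for every $\phi \in X^*$ is uniquely determined, which is exactly the defining property of the vector integral. First I would observe that for each $\phi \in X^*$, the scalar function $\phi\circ F$ is continuous on $Y$ and bounded on the compact set $\mathrm{supp}(\lambda)$, so the scalar integral $c_\phi := \int_Y \phi\circ F \, d\lambda$ is well-defined and linear in $\phi$.

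Next, for each finite tuple $(\phi_1,\ldots,\phi_n) \subseteq X^*$ I would consider
$$ E_{\phi_1,\ldots,\phi_n} \;:=\; \{\, x\in K : \phi_i(x)=c_{\phi_i} \text{ for } 1\le i\le n\,\}. $$
Each such set is closed in $K$ by continuity of the $\phi_i$, and the collection has the finite intersection property since $E_{\phi_1,\ldots,\phi_n,\psi_1,\ldots,\psi_m} \subseteq E_{\phi_1,\ldots,\phi_n}\cap E_{\psi_1,\ldots,\psi_m}$. If I can show each $E_{\phi_1,\ldots,\phi_n}$ is non-empty, then compactness of $K$ forces $\bigcap E_{\phi_1,\ldots,\phi_n}\neq\emptyset$, and any $\omega$ in that intersection satisfies $\phi(\omega)=c_\phi$ for all $\phi\in X^*$ and lies in $K=\overline{co}(F(Y))$, completing the proof.

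The core step is therefore the non-emptiness of $E_{\phi_1,\ldots,\phi_n}$, which I would attack by Hahn--Banach separation in $\mathbb{F}^n$ (where $\mathbb{F}$ is the scalar field). Consider the continuous linear map $\Phi:X\to\mathbb{F}^n$ defined by $\Phi(x):=(\phi_1(x),\ldots,\phi_n(x))$. Then $\Phi(K)$ is compact and convex in $\mathbb{F}^n$; suppose for contradiction that $c:=(c_{\phi_1},\ldots,c_{\phi_n})\notin \Phi(K)$. Finite-dimensional Hahn--Banach then yields scalars $a_1,\ldots,a_n$ and $\alpha\in\mathbb{R}$ with
$$\mathrm{Re}\sum_{i=1}^n a_i c_{\phi_i} \;>\; \alpha \;\ge\; \mathrm{Re}\sum_{i=1}^n a_i \phi_i(x)\quad\text{for every } x\in K.$$
Putting $\phi := \sum_i a_i \phi_i \in X^*$, the right-hand inequality applied to $x=F(y)\in K$ gives $\mathrm{Re}(\phi\circ F)(y)\le \alpha$ on $Y$, and integrating against the probability measure $\lambda$ produces $\mathrm{Re}\,c_\phi \le \alpha$. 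By linearity of the scalar integral, $c_\phi=\sum_i a_i c_{\phi_i}$, which contradicts the strict left-hand inequality.

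The main point requiring care is the complex scalar case: the separation must be stated in terms of real parts of $\mathbb{F}$-linear functionals, and the identity $c_{\sum a_i\phi_i}=\sum a_i c_{\phi_i}$ is what converts the pointwise bound on $K$ into a contradiction after integration. The compact-support hypothesis on $\lambda$ is essential for integrability of $\phi\circ F$; everything else is a clean blend of separation and the finite intersection property on the compactum $K$.
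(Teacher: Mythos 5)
Your proof is correct and is essentially the argument the paper relies on by citation: this is Theorem 3.27 of Rudin's \emph{Functional Analysis} (reference \cite{RU}), whose proof proceeds exactly as you do --- define $\omega$ by $\phi(\omega)=\int_Y \phi\circ F\,d\lambda$, restrict to the compact support of $\lambda$ for integrability, and obtain existence inside $\overline{co}(F(Y))$ via the finite intersection property of the sets $E_{\phi_1,\ldots,\phi_n}$, whose non-emptiness follows from finite-dimensional separation applied to $\Phi(K)$ together with $c_{\sum a_i\phi_i}=\sum a_i c_{\phi_i}$. Your handling of the complex case via real parts and of the locally compact $Y$ via the compact support are the only adaptations needed, and both are done correctly.
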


\noi We divide the proof of our next theorem in a series of key steps for convenience, due to the length of the proof.


\begin{theorem} \label{wapl=r}
Let $K$ be a semihypergroup. Then 
$$WAP_r(K) \cap UC(K) = WAP_l(K) \cap UC(K). $$
\end{theorem}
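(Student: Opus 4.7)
The plan is to reduce the statement to Grothendieck's double-limit criterion for relative weak compactness in $C(K)$ and to exploit its obvious symmetry under swapping the two sequences involved. The restriction to $UC(K)$ is essential here because, in the semihypergroup setting, even for $f \in C(K)$ the translate $L_x f(y) = \int_K f\,d(p_x*p_y)$ need not be continuous in $y$: the cone topology on $M^+(K)$ is defined using test functions in $C_c^+(K)\cup\{1_K\}$ only, so continuity of $y\mapsto p_x*p_y$ against a general bounded continuous $f$ is not automatic. Uniform continuity of $f$ is precisely what guarantees $\mathcal{O}_l(f),\mathcal{O}_r(f)\subseteq C(K)$, so that both orbits are legitimate candidates for relative weak compactness in the first place.

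Given $f\in WAP_r(K)\cap UC(K)$, the bounded set $\mathcal{O}_r(f)$ is relatively weakly compact in $C(K)$. Identifying $C(K)$ with $C(\beta K)$ via the Stone--\v{C}ech compactification (or by applying Eberlein--\v{S}mulian directly), Grothendieck's criterion yields: for every pair of sequences $(x_n),(y_m)\subseteq K$,
$$\lim_n\lim_m f(y_m*x_n)\;=\;\lim_m\lim_n f(y_m*x_n)$$
whenever both iterated limits exist, where $f(y*x)=\int_K f\,d(p_y*p_x)$. The symmetry step is now immediate: the above condition is quantified over \emph{all} pairs of sequences in $K$, so relabelling $x_n\leftrightarrow y_n$ yields the identical statement
$$\lim_n\lim_m f(x_n*y_m)\;=\;\lim_m\lim_n f(x_n*y_m),$$
which is precisely the Grothendieck double-limit condition characterising relative weak compactness of $\mathcal{O}_l(f)$. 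Together with the uniform bound $\|L_xf\|_\infty\le\|f\|_\infty$, this yields $f\in WAP_l(K)$, and since $f\in UC(K)$ was assumed, the forward inclusion follows. The reverse inclusion follows by interchanging the roles of $L$ and $R$ throughout.

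The main obstacle I anticipate is rigorously justifying Grothendieck's criterion in this semihypergroup framework, where the scalar quantities $f(y_m*x_n)$ are measure-theoretic integrals rather than ordinary point evaluations. Passage to $\beta K$ must be handled with care: one needs to verify that sequences drawn from the dense subset $K\subseteq\beta K$ suffice to test the double-limit condition, which is a consequence of the boundedness of $f$ together with density. Theorem \ref{RU1} is likely to feature here in extending statements about the discrete orbit $\{R_xf:x\in K\}$ to the convex-combination orbit $\{\int_K R_xf\,d\mu(x):\mu\in P_c(K)\}$, guaranteeing that the symmetric double-limit condition on individual elements of $K$ in fact controls the entire weak closure of the orbit, and hence that the Grothendieck equivalence transports cleanly between left and right translations.
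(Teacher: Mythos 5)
Your route is genuinely different from the paper's. The paper never touches the double-limit criterion on $K$ itself: it lifts $f$ to $\tilde f$ on the topological \emph{semigroup} $(P_c(K),*)$ via $x\mapsto p_x$, uses uniform continuity of $f$ together with the Krein--Smulian theorem and the vector-integral result (Theorem \ref{RU1}) to show that the lifted left orbit $\mathcal{O}_l(\tilde f)$ lands inside the weakly compact set $\overline{co}(\mathcal{O}_l(f))\tilde{}\,$, invokes the classical semigroup identity $WAP_l(P_c(K))=WAP_r(P_c(K))$ there, and pulls the conclusion back along $h\mapsto\check h$. Your plan short-circuits all of this by applying Grothendieck's symmetric criterion directly to the kernel $F(y,x)=f(y*x)$ on $K\times K$, whose rows are $\mathcal{O}_l(f)$ and whose columns are $\mathcal{O}_r(f)$; the symmetry observation itself is sound, since $R_xf(y)=L_yf(x)=f(y*x)$ is a single well-defined number. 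If carried out, this is a cleaner and in fact \emph{stronger} argument.

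That, however, is also where the problems lie. First, the decisive step is exactly the one you defer: you must either prove or precisely cite the form of Grothendieck's criterion you need, namely that for a bounded kernel $F$ on $K\times K$ the row family is relatively weakly compact in $\ell^\infty(K)$ iff the column family is, iff the iterated limits agree --- equivalently, the dense-subset refinement for $C(\beta K)$ with test points drawn from $K$ --- together with the observation that for a subset of the norm-closed subspace $C(K)\subset\ell^\infty(K)$, relative weak compactness in $\ell^\infty(K)$ and in $C(K)$ coincide. As written, the proposal asserts the criterion and then lists its justification as an anticipated obstacle, so the core of the proof is missing. Second, your stated reason for restricting to $UC(K)$ is wrong: Proposition \ref{prop} (from Jewett) already gives that $L_xf$ and $R_xf$ are continuous for every $f\in C(K)$, so the orbits lie in $C(K)$ with no uniform-continuity hypothesis; $UC(K)$ enters the paper's proof only to make the orbit map $x\mapsto L_xf$ norm-continuous so that Theorem \ref{RU1} applies. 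Consequently your argument, if completed, would use $UC(K)$ nowhere and would prove $WAP_l(K)=WAP_r(K)$ outright --- you should either claim that stronger statement explicitly and defend it, or locate the step where $UC(K)$ is genuinely required; leaving the hypothesis in while giving an argument that never uses it is a sign that something has not been checked. Third, Theorem \ref{RU1} plays no role whatsoever in a double-limit argument; the final paragraph invoking it to ``control the entire weak closure of the orbit'' is not a step of your proof but a guess at the paper's toolkit, and should be deleted.
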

\begin{proof}
Pick any $f\in WAP_l(K) \cap UC(K)$. We need to show that $f\in WAP_r(K)$, \textit{i.e,} $\mathcal{O}_r(f)$ is relatively weakly compact in $C(K)$. We will show this in five steps.


\noi \textbf{Step I:} \hspace{0.01in} Embed $K$ into $P_c(K)$ through the homomorphism $x\mapsto p_x$ \cite{JE}. Now define a map $\Phi:C(K) \ra C(P_c(K))$ by $\Phi(f) = \tilde{f}$ for any $f\in C(K)$ where
$$\tilde{f}(\mu) := \int f \ d\mu \ \ \ \forall \ \mu \in P_c(K).$$
\noi We know that $\Phi$ is an isometry \cite{JE}. Thus $\Phi$ is continuous in the norm topologies of $C(K)$ and $C(P_c(K))$ and hence $\Phi$ is continuous in the weak topologies of $C(K)$ and $C(P_c(K))$.

\vspace{0.05cm}

\noi \textbf{Step II:} \hspace{0.01in} For any $\mu \in P_c(K)$ define a function $\hat{L}_\mu f$ on $K$ by
$$\hat{L}_\mu f(x) := \int_K f(y*x) \ d\mu(y) .$$
\noi Since $f\in UC(K)$ the map $x\mapsto R_xf$ is continuous, and $\hat{L}_\mu f(x) = \int_K R_xf \ d\mu$ by the above construction. Hence $\hat{L}_\mu f\in C(K)$ for any $\mu \in P_c(K)$. Now define a subset in $C(K)$ as the following.
$$\tilde{\mathcal{O}}_l(f) := \{ \hat{L}_\mu f : \mu \in P_c(K)\}.$$

\noi Now consider the left-translation map $\psi: K\ra C(K)$  given by $\psi(x) := L_xf$. Since $f\in UC(K)$ $\psi$ is continuous on $K$. Also since $f\in WAP_l(K)$ the left orbit of $f$ namely $\psi(K)$ is relatively weakly compact in $C(K)$.

\noi Hence by the Krein-Smulian Theorem we have that $\overline{co}(\psi(K))$ is weakly compact in $C(K)$, \textit{i.e,} $\overline{co}(\mathcal{O}_l(K))$ is weakly compact in $C(K)$.

\vspace{0.05cm}

\noi \textbf{Step III:} \hspace{0.01in} Now can use Theorem \ref{RU1} by letting $(X, \tau) = (C(K), \mbox{ weak topology})$, $Y=K$, $F=\psi$ and $\lambda =\mu$ for any $\mu\in P_c(K)$. Thus we see that $\omega_0 := \int_K\psi \ d\mu \in \overline{co}(\psi(K))$. Now for any $x\in K$ we have that
\begin{eqnarray*}
\omega_0(x) &=& \int_K (\psi(y)) (x) \ d\mu(y) \\
&=& \int_K L_yf(x) \ d\mu(y) \\
&=& \int_K f(y*x) \ d\mu(y) = \hat{L}_\mu f(x).
\end{eqnarray*}
\noi Since $\psi(K) = \mathcal{O}_l(f)$ by construction, we see that $\hat{L}_\mu f \in \overline{co} (\mathcal{O}_l(K))$ for any $\mu \in P_c(K)$. Hence $\tilde{\mathcal{O}}_l(f) \subset \overline{co} (\mathcal{O}_l(K))$. But from Step II we know that $ \overline{co} (\mathcal{O}_l(K))$ is weakly compact in $C(K)$. Hence $\tilde{\mathcal{O}}_l(f)$ is relatively weakly compact in $C(K)$.

\vspace{0.05cm}

\noi \textbf{Step IV:} \hspace{0.01in} Note that $(\hat{L}_\mu f)\tilde{} = L_\mu \tilde{f}$ for any $\mu \in P_c(K)$. This is true since for any $\nu \in P_c(K)$ we have that
\begin{eqnarray*}
(\hat{L}_\mu f)\tilde{}(\nu) &=& \int_K \hat{L}_\mu f (x) \ d\nu(x)\\
&=& \int_K\int_K f(y*x) \ d\mu(y) \ d\nu(x)\\
&=& \int_K f \ d(\mu*\nu)\\
&=& \tilde{f}(\mu*\nu) = L_\mu \tilde{f}(\nu).
\end{eqnarray*}
\noi where the third equality follows from \cite[Theorem 3.1E]{JE}. Thus we see that
 $$(\tilde{\mathcal{O}}_l(f))\tilde{} = \{\tilde{f} : f\in \tilde{\mathcal{O}}_l(f) \subset C(K)\} = \mathcal{O}_l(\tilde{f}).$$
\noi From Step III we know that $\tilde{\mathcal{O}}_l(f)$ is relatively weakly compact in $C(K)$, and from Step I we see that the map $f\mapsto \tilde{f}$ from $C(K)$ to $C(P_c(K))$ is continuous when both spaces are equipped with weak topology. Hence the set $(\tilde{\mathcal{O}}_l(f))\tilde{}$ is also relatively weakly compact in $C(P_c(K))$. Thus $\mathcal{O}_l(\tilde{f})$ is relatively weakly compact in $C(P_c(K))$, \textit{i.e,} $\tilde{f} \in WAP_l(P_c(K))$.

\vspace{0.05cm}

\noi \textbf{Step V:} \hspace{0.01in} We know that $(P_c(K), *)$ is a topological semigroup. Hence $WAP_l(P_c(K)) = WAP_r(P_c(K))$. Thus we have that $\tilde{f} \in WAP_r(P_c(K))$, \textit{i.e,} $\mathcal{O}_r(\tilde{f})$ is relatively weakly compact in $C(P_c(K))$.

\vspace{0.05cm}

\noi Hence the set $N:= \{R_{p_x}\tilde{f} : x\in K\} \subset \mathcal{O}_r(\tilde{f})$ is also relatively weakly compact in $C(P_c(K))$.

\vspace{0.05cm}

\noi Now consider the map $\phi: C(P_c(K)) \ra C(K)$ given by $h\mapsto \check{h}$ where $\check{h}(x) := h(p_x)$ for any $x\in K$. The fact that $\check{h}$ is continuous on $K$ follows directly from the fact that the map $x \mapsto p_x$ is continuous. Also, $\phi$ is continuous since $||\check{h}|| = \sup_{x \in K} |h(p_x)| \leq ||h||$ for any $h\in C(P_c(K))$.

\vspace{0.05in}

\noi Note that for each $h\in C(K)$, $x\in K$, $\phi(R_{p_x}\tilde{h}) = R_xh$ since for any $y\in K$ we have
\begin{eqnarray*}
\phi(R_{p_x}\tilde{h})(y) &=& (R_{p_x}\tilde{h})\check{} (y)\\
&=& \tilde{h} (p_y * p_x)
= h(y*x) = R_x h(y).
\end{eqnarray*}
\noi Thus in particular, we have that $\phi(N) = \mathcal{O}_r(f)$. Since $\phi$ is continuous and $N$ is relatively weakly compact in $C(P_c(K))$, we finally have that $\mathcal{O}_r(f)$ is relatively weakly compact in $C(K)$, \textit{i.e,} $f\in WAP_r(K)$ as required.

\noi Similarly for any $f\in WAP_r(K) \cap UC(K)$ we can show that $f\in WAP_l(K)$. Hence the proof is complete.
\end{proof}

\begin{remark}
In the proof of Theorem \ref{wapl=r}, the ideas involved in steps I-III are similar to \cite[Theorem 2.4]{WO} as no specific involution properties are required. However, the above proof of those steps contains several important details, not found in the aforementioned text. 
\end{remark}

\begin{corollary} \label{apl=r}
Let $K$ be a semihypergroup. Then $AP_l(K) = AP_r(K)$.
\end{corollary}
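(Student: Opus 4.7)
The plan is to imitate the five-step strategy of Theorem \ref{wapl=r} with the weak topology on $C(K)$ and $C(P_c(K))$ replaced by the norm topology throughout. A subtlety is that the hypothesis $f \in UC(K)$ present in Theorem \ref{wapl=r} is absent here, so I must first verify that any $f \in AP_l(K)$ is automatically left uniformly continuous — which turns out to be the only piece of uniform continuity used in the argument.

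First I would assume $f \in AP_l(K)$ and show that $f \in LUC(K)$. If $x_\alpha \to x$ in $K$, then by the joint continuity of $(z,y) \mapsto f(z*y)$ on $K\times K$ for $f\in C(K)$ (a consequence of positive continuity of $*$ combined with Michael-continuity of supports, allowing one to replace $f$ locally by a compactly supported function), we have $L_{x_\alpha}f(y) \to L_xf(y)$ pointwise. Since $\mathcal{O}_l(f)$ is norm-relatively compact, every subnet of $\{L_{x_\alpha}f\}$ has a norm-convergent sub-subnet, whose limit must agree with the pointwise limit $L_xf$. Hence $L_{x_\alpha}f \to L_xf$ in norm, so the map $\psi: y \mapsto L_yf$ is norm-continuous.

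Next I would run the analog of Steps I--V of Theorem \ref{wapl=r}, with two substitutions: (i) weak topology is replaced by norm topology in $C(K)$ and $C(P_c(K))$; (ii) the Krein--Smulian theorem is replaced by Mazur's theorem (the closed convex hull of a norm-compact set in a Banach space is norm-compact). Step I is unchanged, since $\Phi$ is an isometry. In Step II, $\psi$ is norm-continuous by the previous paragraph, and $\overline{co}(\psi(K)) = \overline{co}(\mathcal{O}_l(f))$ is norm-compact by Mazur. In Step III, Theorem \ref{RU1} applies with $(X,\tau) = (C(K), \|\cdot\|)$ (the dual separates points), giving $\omega_0 = \int_K \psi\, d\mu \in \overline{co}(\psi(K))$; the same pointwise computation identifies $\omega_0 = \hat{L}_\mu f$, so $\tilde{\mathcal{O}}_l(f)$ lies in the norm-compact set $\overline{co}(\mathcal{O}_l(f))$. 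Note that, viewed as a vector integral in $C(K)$, $\hat{L}_\mu f$ automatically belongs to $C(K)$, so we never need the right-uniform continuity of $f$. Step IV's identity $(\hat L_\mu f)\tilde{} = L_\mu \tilde f$ together with the isometric property of $\Phi$ yields $\mathcal{O}_l(\tilde f) = \Phi(\tilde{\mathcal{O}}_l(f))$ relatively norm-compact, that is $\tilde f \in AP_l(P_c(K))$.

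For Step V, I would invoke the classical fact that for any topological semigroup $S$ one has $AP_l(S) = AP_r(S)$ (Berglund--Junghenn--Milnes); applied to $S = P_c(K)$ this gives $\tilde f \in AP_r(P_c(K))$, so $N := \{R_{p_x}\tilde f : x \in K\} \subseteq \mathcal{O}_r(\tilde f)$ is relatively norm-compact. Continuity of $\phi: C(P_c(K)) \to C(K)$ from the proof of Theorem \ref{wapl=r} transfers relative norm-compactness, and $\phi(N) = \mathcal{O}_r(f)$, so $f \in AP_r(K)$. The reverse inclusion is symmetric. The main obstacle is Step 0: verifying that norm-relative compactness of $\mathcal{O}_l(f)$ forces $\psi$ to be norm-continuous, since joint continuity of $f(z*y)$ on $K \times K$ is not part of the semihypergroup axioms and must be extracted by combining cone-topology convergence of $p_{x_\alpha}*p_y \to p_x*p_y$ with Michael-topology convergence of supports to localize $f$ onto a fixed compact set.
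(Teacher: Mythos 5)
Your proof is correct and follows essentially the same route as the paper, which simply reruns the five steps of Theorem \ref{wapl=r} with the norm topology in place of the weak topology (and Mazur's theorem in place of Krein--Smulian). Your ``Step 0,'' showing that every $f\in AP_l(K)$ is automatically left uniformly continuous so that the $UC(K)$ hypothesis of Theorem \ref{wapl=r} becomes vacuous, is a worthwhile explicit addition: the paper relies on this only implicitly, deferring it to the proposition (quoted from \cite{WO}) that $AP_l(K), AP_r(K)\subset UC(K)$, stated immediately after the corollary.
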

\begin{proof}
This can be proved in exactly similar manner as in Theorem \ref{wapl=r}. The only modification needed is that, we need to consider the norm topologies on $C(K)$ and $C(P_c(K))$ as opposed to the fact that we had to consider weak topologies on these two spaces in the proof of Theorem \ref{wapl=r}.
\end{proof}

\noi The following facts, regarding the relation between the function spaces in question, are proved for hypergroups in \cite{WO}. The same proof works for semihypergroups as well.

\begin{proposition}
Let $K$ be a semihyergroup. Then the following statements hold true.
\begin{enumerate}
\item $WAP_l(K), WAP_r(K), AP_l(K), AP_r(K)$ are norm-closed, conjugate-closed subsets of $C(K)$ containing the constant functions.
\item $AP_l \subset UC(K)$ and $AP_r(K) \subset UC(K)$.
\item $AP_l(K)\subset WAP_l(K)$ and $AP_r(K)\subset WAP_r(K)$.
\end{enumerate}
\end{proposition}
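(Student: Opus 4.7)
The plan is to dispose of the three claims in order of increasing difficulty. Part (3) is essentially free, (1) decomposes into three routine Banach-space observations, and (2) is the only statement that truly exploits the semihypergroup axioms. For (3), since the weak topology on $C(K)$ is coarser than the norm topology, any norm-relatively-compact set is weakly relatively compact, giving $AP_r(K) \subset WAP_r(K)$ and symmetrically on the left. For (1), two of the three assertions are essentially immediate: constant functions lie in all four spaces because each orbit reduces to a singleton, and conjugate-closedness follows from the pointwise identity
\[ R_x\overline{f}(y) \;=\; \overline{f}(y*x) \;=\; \int_K \overline{f}\, d(p_y * p_x) \;=\; \overline{R_xf(y)}, \]
so that the isometry $g \mapsto \overline{g}$ of $C(K)$ carries $\mathcal{O}_r(f)$ onto $\mathcal{O}_r(\overline{f})$; being also weakly continuous, it preserves relative compactness in both topologies, and the left-orbit case is symmetric.

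For the remaining norm-closedness portion of (1), I would use the standard approximation argument: given $f_n \to f$ in norm with each $f_n \in WAP_r(K)$, the inequality $\|R_x g\|_\infty \le \|g\|_\infty$ yields $\mathcal{O}_r(f) \subset \mathcal{O}_r(f_n) + \|f-f_n\|\cdot B_{C(K)}$ for every $n$, where $B_{C(K)}$ is the closed unit ball. Since each $\mathcal{O}_r(f_n)$ is weakly relatively compact, the classical fact that a bounded set approximable within arbitrarily small norm by weakly compact sets is itself weakly relatively compact (via Eberlein--Smulian plus a diagonal extraction, or equivalently Grothendieck's double-limit criterion) forces $\mathcal{O}_r(f)$ to be weakly relatively compact. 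Replacing the weak topology by the norm topology throughout, together with completeness of $C(K)$, gives the analogous statement for $AP_r(K)$.

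Part (2) is where the argument has real content. I would first show that $x \mapsto R_xf$ is norm-continuous whenever $\mathcal{O}_r(f)$ is norm-relatively-compact. Pointwise continuity of $x \mapsto R_xf(y) = \int f\, d(p_y*p_x)$ follows from positive continuity of convolution (A2), reinforced by (A4) to handle the varying supports. Combined with norm relative compactness of $\mathcal{O}_r(f)$, a subnet argument identifies the limit uniquely as $R_xf$, forcing norm convergence and hence right uniform continuity. For left uniform continuity, the identity $L_xf(y) = R_yf(x)$ gives
\[ \|L_xf - L_{x'}f\|_\infty \;=\; \sup_{y\in K}\, |R_yf(x) - R_yf(x')|, \]
which I bound using equicontinuity: a norm-relatively-compact subset of $C(K)$ can be covered by finitely many $\epsilon$-balls around continuous functions $g_1,\dots,g_n$, and on a common neighborhood of $x$ on which each $g_i$ varies by less than $\epsilon$, every element of $\mathcal{O}_r(f)$ varies by less than $3\epsilon$. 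This drives the displayed supremum to zero as $x' \to x$, giving $f \in UC(K)$.

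The principal obstacle I anticipate is the ``approximation by weakly compact sets'' step in (1): while intuitively clear, it rests on a genuine feature of the weak topology and is the only point where one must go beyond direct manipulation of the definitions. Everything else reduces to formal consequences of the definitions or straightforward applications of axioms (A2)--(A4).
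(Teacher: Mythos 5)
Your proof is correct. Note that the paper itself offers no argument here: it simply remarks that these facts are proved for hypergroups in Wolfenstetter's paper and that ``the same proof works for semihypergroups,'' so you have in effect supplied the verification the paper delegates to a citation. Your argument is the standard one and, importantly, it confirms the paper's implicit claim that nothing in the hypergroup proof depends on an identity or involution: the only structural inputs you use are that $p_y*p_x$ is a probability measure (giving $\|R_xg\|_\infty\le\|g\|_\infty$ and hence that constants have singleton orbits and that orbits depend $1$-Lipschitzly on $f$), separate continuity of $(x,y)\mapsto f(x*y)$, and the identity $L_xf(y)=R_yf(x)$. The two places where the work is genuinely nontrivial are exactly the ones you flag: the lemma that a bounded set lying within arbitrarily small norm distance of weakly relatively compact sets is itself weakly relatively compact (cleanest via the bidual: the weak$^*$ closure of $\mathcal{O}_r(f)$ in $C(K)^{**}$ lies within $\|f-f_n\|$ of $C(K)$ for every $n$, hence in $C(K)$), and the equicontinuity argument deducing left uniform continuity from norm total boundedness of the right orbit, which is where $AP_r(K)\subset UC(K)$ acquires its content. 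One small point worth making explicit in your covering argument: choose the finite $\varepsilon$-net $g_1,\dots,g_n$ from $\mathcal{O}_r(f)$ itself (total boundedness permits this), so that each $g_i=R_{y_i}f$ is continuous by separate continuity and the three-$\varepsilon$ estimate goes through. With that said, your decomposition, your reduction of part (3) to the comparison of topologies, and your subnet argument for right uniform continuity (which duplicates the paper's own Proposition 4.12) are all sound.
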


\noi Next we examine the translation-invariance properties of the spaces of (weakly) almost periodic functions on a (semitopological) semihypergroup. Recall that a function space $\mathcal{F}$ on a (semitopological) semihypergroup $K$  is called left (or right) translation-invariant if we have that $L_xf \in \mathcal{F}$ (or $R_xf \in \mathcal{F}$) for any $f\in \mathcal{F}$ and $x\in K$. Also, $\mathcal{F}$ is simply said to be translation-invariant if it is both left and right translation-invariant.

\begin{proposition} \label{aptr}
Let $K$ be any semitopological semihypergroup. Then $AP(K)$ is translation-invariant.
\end{proposition}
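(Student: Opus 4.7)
My plan is to show that $AP_r(K)$ (and by a symmetric argument $AP_l(K)$) is invariant under both left and right translations. Fix $f \in AP(K)$ and $x \in K$; the goal is to verify that both $\mathcal{O}_r(L_x f)$ and $\mathcal{O}_r(R_x f)$ are relatively norm-compact in $C(K)$.

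For \emph{left} translation invariance, the first step is to establish the commutation identity $R_y(L_x f) = L_x(R_y f)$ for every $y \in K$. Unfolding both sides as $\int f\,d(p_a * p_b)$ and invoking associativity of $*$ on $M(K)$ together with the Fubini-type result from \cite[Theorem 3.1E]{JE} (as in Step IV of the proof of Theorem \ref{wapl=r}), each side collapses to $\int_K f\,d(p_x*(p_z*p_y))$ when evaluated at $z \in K$. Next, observe that $L_x : C(K) \to C(K)$ is well-defined (using the semitopological hypothesis) and linear with $\|L_x\|\le 1$, since $p_x * p_y$ is a probability measure and hence $\|L_x g\|_\infty \le \|g\|_\infty$. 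Consequently $\mathcal{O}_r(L_x f) = L_x(\mathcal{O}_r(f))$ is the image of a relatively norm-compact set under a bounded operator, hence relatively norm-compact; this gives $L_x f \in AP_r(K)$.

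For \emph{right} translation invariance, a parallel manipulation using associativity and Fubini gives the pointwise identity
$$R_y(R_x f)(z) \;=\; \int_K f\,d\bigl(p_z * (p_y * p_x)\bigr) \;=\; \int_K R_u f(z)\,d(p_y * p_x)(u).$$
I would upgrade this to a norm-convergent vector integral in $C(K)$ via Theorem \ref{RU1}, applied with $X = C(K)$ in its norm topology, $Y = \operatorname{supp}(p_y * p_x)$ (compact by (A3)), $F(u) = R_u f$, and $\lambda = p_y * p_x$. The hypotheses of Theorem \ref{RU1} are met because $f \in AP_r(K) \subset UC(K)$ forces $u \mapsto R_u f$ to be norm-continuous, and because $\overline{\operatorname{co}}(\mathcal{O}_r(f))$ is norm-compact by Mazur's theorem (the closed convex hull of a norm-compact set in a Banach space is norm-compact). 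This yields $R_y(R_x f) \in \overline{\operatorname{co}}(\mathcal{O}_r(f))$, so $\mathcal{O}_r(R_x f)$ lies inside a norm-compact set and $R_x f \in AP_r(K)$.

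The main obstacle is the rigorous passage from the pointwise identity to the norm-valued vector integral for $R_y(R_x f)$; this is essentially a norm-topology strengthening of the weak version carried out in Steps III--IV of Theorem \ref{wapl=r}, where the replacement of the Krein--Smulian argument by Mazur's theorem is precisely what is needed to change topologies. Once both translation invariances for $AP_r(K)$ are settled, the corresponding statements for $AP_l(K)$ follow by interchanging the roles of $L$ and $R$ throughout, so $AP(K)$ (in either interpretation, via Corollary \ref{apl=r} or as $AP_l(K)\cap AP_r(K)$) is translation-invariant.
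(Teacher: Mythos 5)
Your proof is correct, and its left-translation half is essentially the paper's argument: the commutation $R_y L_x f = L_x R_y f$ (from associativity of $*$) plus the observation that $L_x$ is a norm-contraction on $C(K)$ gives $\mathcal{O}_r(L_x f) = L_x(\mathcal{O}_r(f))$, the continuous image of a relatively norm-compact set. Where you genuinely diverge is the right-translation half. The paper just runs the mirror-image of the same argument: $L_y R_{x_0}f = R_{x_0} L_y f$, $R_{x_0}$ is a contraction, and $\mathcal{O}_l(f)$ is relatively norm-compact, so $\mathcal{O}_l(R_{x_0}f)$ is relatively norm-compact and $R_{x_0}f \in AP_l(K) = AP(K)$, invoking Corollary \ref{apl=r} at the end. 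You instead control the \emph{right} orbit of $R_x f$ directly, via the vector-integral identity $R_y(R_x f) = \int_K R_u f \, d(p_y * p_x)(u)$, Theorem \ref{RU1} applied in the norm topology, and Mazur's theorem (Theorem \ref{mazth}), which yields the inclusion $\mathcal{O}_r(R_x f) \subset \overline{co}(\mathcal{O}_r(f))$. Both routes are valid. The paper's is shorter and avoids the vector integral entirely, but it needs $AP_l(K) = AP_r(K)$ to convert left-orbit compactness into membership in $AP(K)$; yours stays entirely within right orbits (postponing Corollary \ref{apl=r} to the very end, as you note), at the cost of invoking the integration machinery of Steps III--IV of Theorem \ref{wapl=r} and the inclusion $AP_r(K) \subset UC(K)$ to get norm-continuity of $u \mapsto R_u f$ on $\operatorname{supp}(p_y * p_x)$. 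As a small bonus, your argument delivers the slightly stronger conclusion that the right orbit of $R_x f$ sits inside the closed convex hull of $\mathcal{O}_r(f)$, not merely inside some compact set.
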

\begin{proof}
Pick any $f\in AP(K)$, $x_0 \in K$. Now consider the map $\Phi: \ C(K) \ra  C(K)$ given by $g \mapsto L_{x_0}g$.
Note that for any $x, y, t \in K$ we have that 
$$R_xL_yf(t) = L_yf(t*x) = f(y*t*x) = R_xf(y*t) = L_yR_xf(t) .$$ 
\noi Hence in turn we have
\begin{eqnarray*}
\mathcal{O}_r (L_{x_0}f) &=& \{R_xL_{x_0}f : x\in K\}\\
&=& \{L_{x_0}R_xf  : x \in K\} = \Phi(\mathcal{O}_r(f)).
\end{eqnarray*}
\noi Now for any two functions $g, h \in C(K)$ we have that
\begin{eqnarray*}
||\Phi(g) - \Phi(h)|| &=& \sup_{y\in K} |L_{x_0}(g-h) (y)|\\
&\leq & ||g-h|| \ \sup_{y\in K} |(p_{x_0}*p_y)(K)| = ||g-h|| \ .
\end{eqnarray*}
\noi Thus we see that $\Phi$ is continuous in the norm topology on $C(K)$. Hence $\Phi(\overline{\mathcal{O}_r(f)})$ is compact since $f\in AP_r(K)$. Also as noted before, since $\mathcal{O}_r(L_{x_0}f) = \Phi(\mathcal{O}_r(f)) \subset \Phi(\overline{\mathcal{O}_r(f)})$ we have that $\mathcal{O}_r(L_{x_0}f)$ is relatively compact in $C(K)$ with respect to the norm-topology and hence $L_{x_0}f \in AP_r(K) = AP(K)$. A similar argument, using the map $\Psi: \ C(K) \ra C(K)$ given by $g \mapsto R_{x_0}g$, shows that $\mathcal{O}_l (R_{x_0}f)$ is relatively compact in $C(K)$ so that $R_{x_0}f \in AP_l(K) = AP(K)$ as required.
\end{proof}

\begin{proposition}
Let $K$ be any semitopological semihypergroup. Then 
\begin{enumerate}
\item $WAP_r(K)$ is left translation-invariant.
\item $WAP_l(K)$ is right translation-invariant.
\end{enumerate}
\end{proposition}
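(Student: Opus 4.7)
The plan is to mimic the argument of Proposition \ref{aptr}, replacing norm-topology with weak-topology on $C(K)$. The key algebraic ingredient is the commutation relation $R_x L_{x_0} f = L_{x_0} R_x f$, valid for all $f \in C(K)$ and $x, x_0 \in K$. This follows directly from associativity of convolution in $M(K)$: for any $t \in K$, both quantities equal $\int_K f \, d\bigl((p_{x_0}*p_t)*p_x\bigr) = \int_K f \, d\bigl(p_{x_0}*(p_t*p_x)\bigr)$. Consequently, defining $\Phi: C(K) \to C(K)$ by $\Phi(g) := L_{x_0} g$, we immediately get
\[
\mathcal{O}_r(L_{x_0} f) \;=\; \{R_x L_{x_0} f : x \in K\} \;=\; \{L_{x_0} R_x f : x \in K\} \;=\; \Phi(\mathcal{O}_r(f)).
\]

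Next I would verify that $\Phi$ is a well-defined bounded linear operator on $C(K)$. Well-definedness amounts to showing $L_{x_0} g \in C(K)$ whenever $g \in C(K)$, which uses the left-topological structure of $K$: the map $t \mapsto p_{x_0}*p_t$ is continuous into $M^+(K)$ with the cone topology, and hence $t \mapsto \int_K g\, d(p_{x_0}*p_t) = L_{x_0} g(t)$ is continuous. Linearity is clear, and the norm bound $\|\Phi(g)\| \leq \|g\|$ follows because each $p_{x_0}*p_t$ is a probability measure, precisely as in the computation at the end of Proposition \ref{aptr}.

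With $\Phi$ bounded and linear, it is automatically continuous from $(C(K), \text{weak})$ to $(C(K), \text{weak})$, hence sends weakly relatively compact sets to weakly relatively compact sets. Since $f \in WAP_r(K)$ means that $\mathcal{O}_r(f)$ is relatively weakly compact in $C(K)$, the identity $\mathcal{O}_r(L_{x_0}f) = \Phi(\mathcal{O}_r(f))$ shows that $\mathcal{O}_r(L_{x_0}f)$ is relatively weakly compact as well, so $L_{x_0} f \in WAP_r(K)$. This proves (1).

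Assertion (2) is proved by the symmetric argument, replacing $\Phi$ with $\Psi: C(K) \to C(K)$, $\Psi(g) := R_{x_0} g$; well-definedness now invokes the right-topological axiom, and the same commutation relation $L_y R_{x_0} f = R_{x_0} L_y f$ shows $\mathcal{O}_l(R_{x_0} f) = \Psi(\mathcal{O}_l(f))$, which is weakly relatively compact whenever $f \in WAP_l(K)$. There is no real obstacle here — the semitopological hypothesis is used precisely twice (once per direction) to guarantee that the translation operators preserve $C(K)$, and the passage from norm to weak topology is free of charge since the operators are bounded linear.
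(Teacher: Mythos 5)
Your proof is correct, and while it follows the same skeleton as the paper's (the commutation relation $R_xL_{x_0}f = L_{x_0}R_xf$, the operator $\Phi(g)=L_{x_0}g$, and the identity $\mathcal{O}_r(L_{x_0}f)=\Phi(\mathcal{O}_r(f))$ are all identical), the crucial continuity step is handled by a different device. The paper does not use the fact that a norm-bounded linear operator is weak--weak continuous; instead it restricts $\Phi$ to the weakly compact set $\overline{\mathcal{O}_r(f)}^{w}$, observes that the weak topology coincides there with the topology of pointwise convergence, and checks that $\Phi$ is continuous for pointwise convergence via the separate continuity of $(x,y)\mapsto g(x*y)$ (Proposition \ref{prop}). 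Your route --- $\|\Phi(g)\|\leq\|g\|$, hence $\Lambda\circ\Phi\in C(K)^*$ for every $\Lambda\in C(K)^*$, hence $\Phi$ is weak--weak continuous on all of $C(K)$ and so carries the weakly compact set $\overline{\mathcal{O}_r(f)}^{w}$ onto a weakly compact set containing $\mathcal{O}_r(L_{x_0}f)$ --- is cleaner and arguably more robust: it sidesteps the paper's passage from pointwise convergence of $L_{x_0}g_\alpha$ back to weak convergence in $C(K)$, which is the most delicate point of the published argument. The only place where you should lean on the paper rather than on the cone topology directly is the well-definedness of $\Phi$: the fact that $L_{x_0}g\in C(K)$ for $g\in C(K)$ is exactly the separate continuity statement of Proposition \ref{prop} (going back to Jewett), and citing that is both shorter and safer than arguing that cone-topology convergence of positive measures with convergent total mass yields convergence of integrals of arbitrary bounded continuous functions. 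With that substitution, your argument for (1), and its mirror image for (2), is complete.
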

\begin{proof}
Pick any $f\in WAP_r(K)$, $x_0 \in K$. As in the proof of the above theorem, consider the map $\Phi: \ C(K) \ra  C(K)$ given by $g \mapsto L_{x_0}g$.
Note that $\overline{\mathcal{O}_r(f)}^w$ is compact in $C(K)$. But the weak topology on $C(K)$ is stronger than the topology of pointwise-convergence where the latter topology is Hausdorff. Hence these two topologies coincide on any compact subset on $C(K)$, in particular on $\overline{\mathcal{O}_r(f)}^w$.

\noi Now let $g \in \overline{\mathcal{O}_r(f)}^w$ and $\{g_\alpha\}$ be a net in $\overline{\mathcal{O}_r(f)}^w$ such that $g_\alpha\longrightarrow^{\hspace{-0.16in} w} \ g$ on $K$. Then we have
\begin{eqnarray*}
&& \int_K g_\alpha \ d(p_{x_0}*p_y) \ra  \int_K g \ d(p_{x_0}*p_y) \ \  \mbox{for each } y \in K.\\
&\Rightarrow & L_{x_0}g_\alpha \ra  L_{x_0}g \ \  \mbox{ pointwise on } K.\\
&\Rightarrow & L_{x_0}g_\alpha \longrightarrow^{\hspace{-0.16in} w} \ L_{x_0}g \ \  \mbox{ on } C(K).
\end{eqnarray*}
\noi Thus $\Phi$ is weak-weak continuous on $\overline{\mathcal{O}_r(f)}^w$ and hence $\Phi(\overline{\mathcal{O}_r(f)}^w)$ is weakly compact in $C(K)$. Now the fact that $L_{x_0}f \in WAP_r(K)$ follows from the fact that 
\begin{eqnarray*}
\mathcal{O}_r (L_{x_0}f) &=& \{R_xL_{x_0}f : x\in K\}\\
&=& \{L_{x_0}R_xf  : x \in K\} = \Phi(\mathcal{O}_r(f)) \subseteq  \Phi(\overline{\mathcal{O}_r(f)}^w).
\end{eqnarray*}
\noi In a similar manner we can also see that $WAP_l(K)$ is right translation-invariant.
\end{proof}

\noi Next we examine the behavior of the (weakly) almost periodic function spaces when the underlying (semitopological) semihypergroup is compact. Before we proceed further, let us first quickly recall a widely-used result proved by A. Grothendieck \cite{GRO}.

\begin{theorem}[Grothendieck]\label{grothm}
Let $X$ be a compact Hausdorff space. Then a bounded set in $C(X)$ is weakly compact if and only if it is compact in the topology of pointwise convergence.
\end{theorem}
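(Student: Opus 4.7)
The plan is to prove the two directions separately, with the easy one first and the substantive content concentrated in the converse.

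For the direction ``weakly compact $\Rightarrow$ pointwise compact,'' I would observe that for every $x\in X$, the point evaluation $\delta_x : f\mapsto f(x)$ is a bounded linear functional on $C(X)$; by the Riesz representation theorem it corresponds to the Dirac measure $p_x \in M(X) = C(X)^*$. Hence the topology of pointwise convergence on $C(X)$ is generated by a subfamily of $C(X)^*$, making it coarser than the weak topology. Since both topologies are Hausdorff, any weakly compact set is automatically compact in the weaker pointwise topology.

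For the substantive direction, let $A \subset C(X)$ be bounded and pointwise compact. I would invoke the Eberlein--\v{S}mulian theorem to reduce weak compactness to weak sequential compactness, so it suffices to show that every sequence $(f_n) \subset A$ admits a subsequence converging weakly in $C(X)$. The bridge between pointwise and weak convergence here is the dominated convergence theorem: if a uniformly bounded sequence $(f_n) \subset C(X)$ converges pointwise to some $f \in C(X)$, then for every $\mu \in M(X)$ we have $\int f_n\, d\mu \to \int f\, d\mu$, which is precisely weak convergence $f_n \to f$ in $C(X)$. So if I can extract a pointwise convergent subsequence with continuous limit, I am done.

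The main obstacle is that pointwise compactness does not, by general topology alone, yield sequential subconvergence, nor does it guarantee that pointwise limits of sequences remain continuous. This is exactly the heart of Grothendieck's contribution: one must invoke the double-limit (interchange of iterated limits) criterion, which asserts that a bounded $A \subset C(X)$ is weakly relatively compact if and only if for every sequence $(f_n) \subset A$ and every sequence $(x_m) \subset X$ the iterated limits $\lim_n \lim_m f_n(x_m)$ and $\lim_m \lim_n f_n(x_m)$ coincide whenever both exist. The plan is to show that pointwise compactness of $A$ (combined with compactness of $X$) forces the two iterated limits to agree by extracting cluster points in $A$ and in $X$ and comparing them. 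Equivalently, one may embed $C(X) \hookrightarrow \mathbb{C}^X$, take the pointwise closure of $A$ inside $\mathbb{C}^X$, and show that this closure still consists of continuous functions; the closed-in-$C(X)$ version of $A$ is then pointwise compact, and dominated convergence together with Eberlein--\v{S}mulian delivers weak compactness. The delicate point, and the real work, is this continuity of pointwise limits, which uses compactness of $X$ and boundedness of $A$ in an essential way.
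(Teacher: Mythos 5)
The paper does not prove this statement at all: it is recalled as a known result with a citation to Grothendieck's 1952 paper \cite{GRO}, so there is no in-paper argument to compare yours against. Judged on its own, your sketch follows the standard route. The easy direction is correct: point evaluations lie in $C(X)^*$, so the pointwise topology is coarser than the weak topology and weak compactness passes to it. For the converse, once you permit yourself to quote the double-limit criterion (the Eberlein--Grothendieck theorem), the argument does close up, and the cluster-point scheme you gesture at is exactly right: given sequences $(f_n)\subset A$ and $(x_m)\subset X$ with both iterated limits existing, take a pointwise cluster point $f\in A$ of $(f_n)$ and a cluster point $x\in X$ of $(x_m)$; then $f(x_m)=\lim_n f_n(x_m)$ for each $m$, $f_n(x)=\lim_m f_n(x_m)$ for each $n$ by continuity of $f_n$, and continuity of $f$ at $x$ identifies both iterated limits with $f(x)$. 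One should finish by noting that relative weak compactness makes the weak and pointwise topologies coincide on the weak closure of $A$, so pointwise compactness of $A$ forces $A$ to be weakly closed, hence weakly compact.

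Two corrections to your framing. First, the ``delicate point'' you isolate --- continuity of pointwise limits --- is not actually where the work is under the stated hypothesis: $A$ is assumed \emph{compact} (not merely relatively compact) in the pointwise topology as a subset of $C(X)$, so every pointwise cluster point of a net in $A$ already lies in $A$ and is therefore continuous by fiat; there is nothing to prove there. Second, the Eberlein--\v{S}mulian/dominated-convergence detour is redundant once the double-limit criterion is invoked, and on its own it does not work, since pointwise compactness of $A$ gives no sequential subconvergence (as you yourself observe). So the only genuine content of your proof is the verification of the double-limit condition, and the only genuine gap is that the double-limit criterion itself is quoted rather than proved --- which is defensible here, since the paper quotes the entire theorem without proof.
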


\noi The next Proposition also follows easily from \cite[Section 3]{JE}.

\begin{proposition} \label{prop}
Let $K$ be a semitopological semihypergroup and $f$ a continuous function on $C(K)$. Then the map $(x, y)\mapsto f(x*y)$ is separately continuous.
\end{proposition}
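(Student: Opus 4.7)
The plan is to show separate continuity one variable at a time, translating the assertion into convergence of measure integrals and then combining property (A2$'$), the continuity of the point-mass embedding, and property (A4) to handle a general continuous $f$ via a cutoff argument.

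First I would fix $y\in K$ and a net $x_\alpha\to x$ in $K$, and rewrite
\[
f(x_\alpha*y)=\int_K f\,d(p_{x_\alpha}*p_y),\qquad f(x*y)=\int_K f\,d(p_x*p_y).
\]
Since $u\mapsto p_u$ is continuous from $K$ into $M^+(K)$ with the cone topology (this is part of the standard embedding from \cite{JE}), and since by the right-topological property (A2$''$) of a semitopological semihypergroup the map $R_{p_y}:\mu\mapsto \mu*p_y$ is continuous on $M^+(K)$, we get $p_{x_\alpha}*p_y\to p_x*p_y$ in the cone topology.

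The main obstacle is that the cone topology only tests integration against $g\in C_c^+(K)\cup\{1_K\}$, whereas $f$ is merely a bounded continuous function. I would bridge this gap using (A4): the map $(u,v)\mapsto\operatorname{supp}(p_u*p_v)$ is continuous into $\mathfrak{C}(K)$ with the Michael topology, and from the Michael-topology description it follows that one can choose an open relatively compact neighborhood $V$ of $\operatorname{supp}(p_x*p_y)$ such that eventually $\operatorname{supp}(p_{x_\alpha}*p_y)\subset V$. Pick $\phi\in C_c^+(K)$ with $\phi\equiv 1$ on $\overline V$. Then $f\phi\in C_c(K)$, and for all $\alpha$ sufficiently large,
\[
\int_K f\,d(p_{x_\alpha}*p_y)=\int_K f\phi\,d(p_{x_\alpha}*p_y),
\qquad
\int_K f\,d(p_x*p_y)=\int_K f\phi\,d(p_x*p_y).
\]
Decomposing $f\phi$ into a linear combination of functions in $C_c^+(K)$ (positive/negative, real/imaginary parts), the cone-topology convergence yields
\[
\int_K f\phi\,d(p_{x_\alpha}*p_y)\longrightarrow \int_K f\phi\,d(p_x*p_y),
\]
so $f(x_\alpha*y)\to f(x*y)$, establishing continuity in the first variable.

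Continuity in the second variable is proved by the identical argument with (A2$''$) replaced by the left-topological property (A2$'$), using $L_{p_x}:\mu\mapsto p_x*\mu$ in place of $R_{p_y}$; the cutoff step is identical since (A4) is symmetric. This completes the proof. The one step I expect to require care is the Michael-topology argument producing the common compact trap $\overline V$ for the supports; everything else is essentially a direct transfer of the semitopological axioms through the embedding $x\mapsto p_x$.
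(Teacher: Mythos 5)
Your argument is correct. Note that the paper itself gives no proof of this proposition --- it is stated as following ``easily from [JE, Section 3]'' --- so what you have written is a self-contained reconstruction of Jewett's argument directly from the axioms, and every step checks out: the embedding $u\mapsto p_u$ is cone-continuous because $\int g\,dp_u=g(u)$ for $g\in C_c^+(K)\cup\{1_K\}$; composing with $R_{p_y}$ (axiom (A2$''$), available since a semitopological semihypergroup is in particular right topological) gives $p_{x_\alpha}*p_y\to p_x*p_y$ in the cone topology; and your cutoff via (A4) is legitimate, since for any relatively compact open $V\supset\operatorname{supp}(p_x*p_y)$ the sub-basic Michael-open set $\mathcal{C}_V(V)$ is a neighbourhood of $\operatorname{supp}(p_x*p_y)$ consisting of compact sets contained in $V$, so the supports $\operatorname{supp}(p_{x_\alpha}*p_y)$ are eventually trapped in $V$ and the integrals of $f$ and $f\phi$ agree. (One small slip in your opening plan: you announce (A2$'$) for the first variable, but the first variable requires (A2$''$), as your detailed argument correctly uses.) It is worth noting that the cutoff step could also be done without (A4): since $1_K$ is among the test functions for the cone topology, the total masses $\left(p_{x_\alpha}*p_y\right)(K)$ converge to $\left(p_x*p_y\right)(K)=1$, and vague convergence of positive measures together with convergence of total masses already upgrades to convergence against all of $C(K)$ by a standard tightness estimate. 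Your route through (A4) buys a cleaner, purely structural containment of supports; the tightness route is more robust in settings where the support map is not assumed continuous.
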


\begin{proposition} \label{sh1}
If $K$ is a compact semitopological semihypergroup, then 
$$WAP_r(K) = C(K).$$
\end{proposition}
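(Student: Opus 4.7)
The plan is to prove $WAP_r(K) = C(K)$ via Grothendieck's theorem (Theorem \ref{grothm}) combined with the separate continuity statement from Proposition \ref{prop}. Since the inclusion $WAP_r(K) \subseteq C(K)$ is automatic, the only content is the reverse inclusion.

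First I would fix an arbitrary $f \in C(K)$ and consider the right-translation map $\psi : K \to C(K)$ given by $\psi(x) = R_x f$. Before saying anything about weak compactness, I need two preliminary observations. (i) $\mathcal{O}_r(f)$ is norm-bounded in $C(K)$: for every $x, y \in K$,
\[
|R_x f(y)| = \Bigl|\int_K f \, d(p_y * p_x)\Bigr| \le \|f\|_\infty \,(p_y * p_x)(K) = \|f\|_\infty,
\]
because $p_y * p_x \in P_c(K)$ by axiom (A3). (ii) Each $R_x f$ is actually continuous on $K$: the map $y \mapsto f(y * x)$ is continuous by Proposition \ref{prop}. So $\mathcal{O}_r(f) \subset C(K)$ and is bounded by $\|f\|_\infty$.

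Next I would show that $\mathcal{O}_r(f)$ is compact in the topology of pointwise convergence on $C(K)$. For this, apply Proposition \ref{prop} again in the other variable: for each fixed $y \in K$, the map $x \mapsto R_x f(y) = f(y * x)$ is continuous on $K$. Thus $\psi : K \to C(K)$ is continuous when $C(K)$ is endowed with the pointwise convergence topology. Since $K$ is compact, the image $\psi(K) = \mathcal{O}_r(f)$ is compact in the pointwise topology on $C(K)$.

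Combining the two facts—boundedness in $C(K)$ and compactness in the pointwise topology—Grothendieck's theorem (Theorem \ref{grothm}) yields that $\mathcal{O}_r(f)$ is weakly compact in $C(K)$, hence in particular relatively weakly compact, which is precisely the definition of $f \in WAP_r(K)$. Since $f \in C(K)$ was arbitrary, $WAP_r(K) = C(K)$. The only conceptual hurdle is confirming the separate continuity hypothesis of Proposition \ref{prop}—which underlies both the continuity of each $R_x f$ and the continuity of $x \mapsto R_x f$ into $C(K)_{\mathrm{ptwise}}$—but that proposition is already recorded in the excerpt, so there is no genuine obstacle; the proof is essentially an application of Grothendieck plus separate continuity.
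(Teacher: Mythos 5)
Your proof is correct and follows essentially the same route as the paper: separate continuity of $(x,y)\mapsto f(x*y)$ gives continuity of $x\mapsto R_xf$ into $C(K)$ with the pointwise topology, compactness of $K$ gives pointwise compactness of $\mathcal{O}_r(f)$, and Grothendieck's theorem upgrades this to weak compactness. Your explicit verification of the norm-boundedness of $\mathcal{O}_r(f)$ and of the continuity of each $R_xf$ (both needed to apply Theorem \ref{grothm}) is a welcome addition of detail that the paper leaves implicit.
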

\begin{proof}

Pick any $f\in C(K)$. We need to show that $\mathcal{O}_r(f)$ is weakly compact in $C(K)$.

\vspace{0.05cm}

\noi Let $\{x_\alpha\}$ be a net in $K$ converging to $x$. By Proposition \ref{prop} we see that for each $y\in K$ the map $x\mapsto f(x*y)=R_xf(y)$ is continuous and hence $R_{x_\alpha}f(y) \rightarrow R_xf(y)$. Thus the map $x\mapsto R_xf$ from $K$ into $C(K)$ is continuous where $C(K)$ is equipped with the topology of pointwise convergence.

\noi By Theorem \ref{grothm} we have that $\mathcal{O}_r(f)$ is relatively compact in $C(K)$ with respect to the weak topology.
\end{proof}

\noi It turns out that the result becomes much stronger if we have joint-continuity on the convolution product $*$ on $M(K)$, as opposed to separate continuity as in the above case. The semigroup versions for both the above proposition and the following theorem can be found in \cite{GL2}.

\begin{theorem} \label{sh2}
If $K$ is a compact semihypergroup, then 
$$AP(K)=WAP_l(K)=WAP_r(K)=C(K).$$
\end{theorem}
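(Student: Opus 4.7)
The plan is to establish the chain of equalities by proving that every continuous function on a compact semihypergroup is almost periodic, after which everything collapses via the elementary inclusions $AP_l(K) \subseteq WAP_l(K) \subseteq C(K)$ and $AP_r(K) \subseteq WAP_r(K) \subseteq C(K)$ recorded in the preceding proposition, together with Corollary \ref{apl=r}. Thus the only non-trivial direction is $C(K) \subseteq AP(K)$. I fix $f \in C(K)$; the goal is to show that $\mathcal{O}_l(f) = \{L_xf : x\in K\}$ is relatively compact in $C(K)$ with respect to the norm topology, which will give $f \in AP_l(K) = AP(K)$.

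The heart of the argument is to establish that the map $F : K \times K \ra \mathbb{C}$, defined by
$$ F(x,y) := f(x*y) = \int_K f \, d(p_x*p_y), $$
is jointly continuous, and I expect this to be the main obstacle. It is precisely the step that exploits the full semihypergroup hypothesis (not merely semitopological), and thereby permits us to upgrade the weak compactness of Proposition \ref{sh1} to norm compactness. Since $K$ is compact, the cone topology on $M^+(K)$ coincides with the weak*-topology, so the functional $\mu \mapsto \int_K f\, d\mu$ is continuous on $M^+(K)$. The map $x \mapsto p_x$ is continuous from $K$ into $M^+(K)$, because for each $g \in C_c^+(K) \cup \{1_K\}$ the evaluation $\mu \mapsto \int_K g\, d\mu$ reduces on point-masses to the continuous map $x \mapsto g(x)$. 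Axiom \textbf{(A2)} says that $(\mu,\nu)\mapsto \mu*\nu$ is jointly continuous on $M^+(K)\times M^+(K)$. Composing these three facts yields joint continuity of $F$.

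With joint continuity of $F$ in hand, the conclusion is routine. Since $K\times K$ is compact Hausdorff, it admits a unique compatible uniformity, and every continuous scalar-valued function on a compact uniform space is uniformly continuous. Applied to $F$, this gives: for every $\epsilon > 0$ there is a neighborhood $U$ of the diagonal of $K \times K$ such that
$$ \sup_{y \in K} |f(x_1 * y) - f(x_2 * y)| < \epsilon \quad \mbox{whenever } (x_1, x_2) \in U, $$
i.e.\ $\|L_{x_1} f - L_{x_2} f\|_\infty < \epsilon$. Hence $x\mapsto L_xf$ is continuous from $K$ into $(C(K), \|\cdot\|_\infty)$, so $\mathcal{O}_l(f)$ is the continuous image of the compact space $K$ and is therefore compact in the norm topology. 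Thus $f\in AP_l(K) = AP(K)$ by Corollary \ref{apl=r}, giving $C(K) \subseteq AP(K)$ and finishing the proof.
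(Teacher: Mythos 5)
Your proof is correct and follows essentially the same route as the paper: both deduce joint continuity of $(x,y)\mapsto f(x*y)$ from axiom \textbf{(A2)} together with the continuity of $x\mapsto p_x$, upgrade this to norm continuity of the translation map $x\mapsto L_xf$ (the paper works with $R_xf$, which is symmetric), and conclude that the orbit is norm compact as a continuous image of the compact space $K$. The only cosmetic difference is that you invoke the unique uniformity on a compact Hausdorff space to get uniform continuity of $(x,y)\mapsto f(x*y)$, whereas the paper carries out the equivalent finite-subcover argument by hand.
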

\begin{proof}
Pick any $f\in C(K)$. We need to show that $\mathcal{O}_r(f)$ is relative compact in $C(K)$ with respect to the strong (norm) topology.

\noi Consider the map $\Phi:K\ra C(K)$ given by $x\mapsto R_xf$. Since the map $(\mu, \nu) \mapsto \mu*\nu$ is continuous on $M^+(K)\times M^+(K)$ we have that the map $(x, y) \mapsto f(x*y)$ is continuous on $K\times K$.

\noi Fix $y_0\in K$ and pick any $\varepsilon >0$. Then for each $x\in K$ we will get open neighborhoods $V_x$ of $x$ and $W_x$ of $y_0$ such that
$$ | f(x*y_0) - f(s*t) | < \varepsilon $$
\noi where $(s, t) \in V_x \times W_x$.

\noi Since $\{V_x\}_{x\in K}$ is an open cover of the compact space $K$, we will have a finite subcover $\{V_{x_i}\}_{i=1}^n$ that covers $K$. Set $W:= \cap_{i=1}^n W_{x_i}$. Then for any $x\in K, t\in W$ 
$$ |R_{y_0}f(x) - R_tf(x)| = | f(x*y_0) - f(x*t) | < \varepsilon .$$

\noi Thus we get an open neighborhood $W$ of $y$ such that $||R_{y_0}f - R_tf ||_\infty < \varepsilon$ for any $t\in W$. Hence the map $\Phi$ is continuous and so $\mathcal{O}_r(f)$ is compact.

\noi Since $AP(K) \subset WAP_l(K)$ and $AP(K) \subset WAP_r(K)$ the result follows.
\end{proof}

\noi The immediate question that naturally rises now is whether converses to Proposition \ref{sh1} and Theorem \ref{sh2} also hold. The answer is yes for hypergroups. For semihypergroups, the converse is partially true, which needs the introduction of some more classes and techniques regarding semihypergroups. This will be discussed in details in another upcoming article by the author. Before we proceed further, let us first note the following property of (weakly) almost periodic functions on a semitopological semihypergroup.

\begin{proposition} \label{sh3}
Let $K$ be a semitopological semihypergroup and $f\in C(K)$. If $\mathcal{O}_r(f)$ is (weakly) relatively compact in $C(K)$ then the map $x\mapsto R_xf$ is (weakly) continuous on $K$.
\end{proposition}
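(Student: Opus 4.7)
The plan is to reduce both assertions—weak continuity in the weakly relatively compact case, and norm continuity in the relatively compact case—to a single pointwise-continuity statement, then upgrade it using a standard topological fact. The crucial preliminary observation is that, independently of any compactness hypothesis on the orbit, the map $x\mapsto R_xf$ is always continuous from $K$ into $C(K)$ equipped with the topology of pointwise convergence. Indeed, for each fixed $y\in K$, Proposition \ref{prop} guarantees that $x\mapsto R_xf(y)=f(y*x)$ is continuous, which is precisely pointwise continuity of $x\mapsto R_xf$.

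To upgrade this pointwise continuity, I will invoke the following general topological fact. Since each point-evaluation $\delta_y$ lies in $C(K)^*$, the pointwise topology on $C(K)$ is coarser than the weak topology, and a fortiori than the norm topology; and all three of these topologies are Hausdorff. Consequently, on any subset $A\subset C(K)$ that is compact in the weak (respectively, norm) topology, the weak (respectively, norm) and pointwise topologies coincide: the identity map from $(A,\mathrm{weak})$ (respectively $(A,\mathrm{norm})$) onto $(A,\mathrm{pointwise})$ is a continuous bijection from a compact space onto a Hausdorff space, hence a homeomorphism.

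The two cases of the proposition then follow in a uniform way. Suppose first that $\mathcal{O}_r(f)$ is weakly relatively compact; then $\overline{\mathcal{O}_r(f)}^{w}$ is weakly compact, and on it the pointwise and weak topologies agree by the topological fact above. Since the image of $x\mapsto R_xf$ lies inside $\overline{\mathcal{O}_r(f)}^{w}$, and the map is already pointwise continuous, it is therefore weakly continuous into $C(K)$. The norm case is handled identically, with $\overline{\mathcal{O}_r(f)}^{\|\cdot\|}$ replacing $\overline{\mathcal{O}_r(f)}^{w}$.

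There is no real obstacle here; the only mild subtlety is keeping careful track of which topologies on $C(K)$ are being compared, but this is routine. The genuine content of the argument sits entirely in Proposition \ref{prop}—the separate continuity of $(x,y)\mapsto f(x*y)$—which captures the best continuity available for a semitopological semihypergroup and is exactly what one needs to obtain the pointwise continuity of the translation map.
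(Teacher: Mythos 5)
Your proposal is correct and follows essentially the same route as the paper: both arguments derive pointwise continuity of $x\mapsto R_xf$ from Proposition \ref{prop} and then upgrade it using the compactness of $\overline{\mathcal{O}_r(f)}$ in the stronger (norm or weak) topology together with the Hausdorffness of the coarser pointwise topology. The paper phrases this via uniqueness of cluster points of the net $\{R_{x_\alpha}f\}$ in a compact set, while you phrase it as the coincidence of the two topologies on a compact set; these are the same argument.
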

\begin{proof}
First assume that $\mathcal{O}_r(f)$ is relatively compact in $C(K)$ in norm topology. Let $\{x_\alpha\}$ be a net in $K$ converging to $x\in K$. Then By Proposition \ref{prop} we have that $f(y*x_\alpha)\ra f(y*x)$ for each $y\in K$ and hence $R_{x_\alpha}f \ra R_xf$ pointwise in $C(K)$.

\noi If the net $\{R_{x_\alpha}f\}$ has a limit point in $C(K)$, it has to be $R_xf$. But $\overline{\mathcal{O}_r(f)}$ is compact in $C(K)$ and hence $R_{x_\alpha}f \ra R_xf$, as required.

\noi The case where $\mathcal{O}_r(f)$ is relatively compact in $C(K)$ with respect to the weak topology, can be proved proceeding along the same lines.
\end{proof}

\noi Note that all the above results will also hold true for the spaces of left almost periodic functions. The next result gives us a sufficient condition for the existence of a left-invariant mean on a semihypergroup. Recall that a (semitopological) semihypergroup $K$ is called \textit{commutative} whenever we have that $\mu * \nu = \nu * \mu$ for all $\mu, \nu \in M(K)$, or equivalently, whenever $p_x*p_y = p_y*p_x$ for all $x, y \in K$.

\begin{theorem}\label{amen1}
Let $K$ be a commutative semihypergroup. Then there exists a LIM on $C(K)$.
\end{theorem}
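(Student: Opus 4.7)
The plan is to adapt the classical Markov--Kakutani argument used for commutative topological semigroups to the semihypergroup setting. The guiding idea is that a LIM on $C(K)$ is exactly a common fixed point in the set of means for the family of adjoints of left translation operators, so commutativity of $K$ (via commutativity of the convolution on point masses) yields commutativity of that family of adjoints, after which the fixed-point theorem delivers the mean.

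First I would verify the basic operator-theoretic setup. For each $x\in K$, define $L_x:C(K)\to C(K)$ by $L_xf(y)=\int_K f\,d(p_x*p_y)$. Boundedness with $\|L_x\|\le 1$ is immediate since $p_x*p_y$ is a probability measure (axiom (A3)), and continuity of $L_xf$ on $K$ is a standard consequence of the semihypergroup axioms (see \cite{JE}). Let $\mathcal{M}\subset C(K)^*$ denote the set of means; it is nonempty, convex, and weak*-compact by Banach--Alaoglu. The adjoint $L_x^*$ is weak*-continuous and affine, and it carries $\mathcal{M}$ into $\mathcal{M}$: for $m\in\mathcal{M}$ one has $(L_x^*m)(1_K)=m(L_x1_K)=m(1_K)=1$ together with $\|L_x^*m\|\le 1$, forcing $\|L_x^*m\|=1$.

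The key step is the commutation identity $L_x^*L_y^*=L_y^*L_x^*$ for all $x,y\in K$. Using associativity of the convolution and commutativity of $K$, for any $f\in C(K)$ and any $x,y,z\in K$,
\[
L_xL_yf(z)\;=\;\int_K f\,d\bigl((p_x*p_y)*p_z\bigr)\;=\;\int_K f\,d\bigl((p_y*p_x)*p_z\bigr)\;=\;L_yL_xf(z),
\]
so $L_xL_y=L_yL_x$ as operators on $C(K)$, and taking adjoints gives the required commutativity. The Markov--Kakutani fixed point theorem now applies to the commuting family $\{L_x^*:x\in K\}$ of weak*-continuous affine self-maps of the nonempty convex weak*-compact set $\mathcal{M}$, yielding some $m\in\mathcal{M}$ with $L_x^*m=m$ for every $x\in K$. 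Unwinding this gives $m(L_xf)=m(f)$ for all $x\in K$ and $f\in C(K)$, so $m$ is the desired LIM.

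The main obstacle is really just a matter of bookkeeping rather than of depth: one must make sure that the translation operators $L_x$ genuinely act on $C(K)$ (so that taking adjoints and iterating compositions stays inside the same Banach space), which is where the joint continuity built into axiom (A2) and the continuity of the support map (A4) are silently used. Once that is in place, the Markov--Kakutani machinery is straightforward, and the only place where commutativity of $K$ is actually invoked is in the identity $(p_x*p_y)*p_z=(p_y*p_x)*p_z$ above.
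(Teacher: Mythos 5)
Your argument is correct, but it takes a genuinely different route from the paper. The paper does not run a fixed-point argument on $K$ at all: it uses the embedding $x\mapsto p_x$ and the isometry $\Phi:C(K)\to C(P_c(K))$, $f\mapsto\tilde f$ with $\tilde f(\mu)=\int_K f\,d\mu$ (the same device as in the proof of Theorem \ref{wapl=r}), observes that $(P_c(K),*)$ is a genuine commutative topological \emph{semigroup}, imports the classical existence of a LIM $m$ on $C(P_c(K))$ from \cite{GL2,MI}, and pulls it back to $\tilde m=m\circ\Phi$ via the identity $(L_xf)^{\tilde{}}=L_{p_x}\tilde f$. Your approach instead reproves the fixed-point step directly on $C(K)^*$ via Markov--Kakutani, which is more self-contained (it does not outsource the commutative-semigroup case) at the cost of redoing the classical argument; the paper's transfer method is shorter and reuses machinery already set up in Section 4. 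One point in your write-up deserves more care: the identity $L_xL_yf(z)=\int_K f\,d\bigl((p_x*p_y)*p_z\bigr)$ is not a definition but a claim, since $f(x*y)$ is itself an integral; unwinding $L_x(L_yf)(z)=\int_K\bigl(\int_K f\,d(p_y*p_w)\bigr)\,d(p_x*p_z)(w)$ requires the Fubini-type result \cite[Theorem 3.1E]{JE} (used by the paper in Step IV of Theorem \ref{wapl=r}), and it actually yields $\int_K f\,d\bigl((p_y*p_x)*p_z\bigr)$, with the factors in the other order; this is harmless here because commutativity makes the two measures equal, but the citation should be made explicit. With that repair, and with Proposition \ref{prop} guaranteeing $L_xf\in C(K)$, your proof is sound.
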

\begin{proof}
Consider the function $\Phi : C(K) \ra C(P_C(K))$ introduced in the proof of Theorem \ref{wapl=r}, which was defined as  $f\mapsto \tilde{f}$ where $\tilde{f}(\mu):= \int_K f \ d\mu$ for each $\mu\in P_C(K)$.

\noi We know that $(P_C(K), *)$ is a commutative semigroup. There there exists a LIM $m$ on $P_C(K)$ \cite{GL2,MI}. define $\tilde{m}:= m\circ \Phi:C(K) \ra \mathbb{C}$, \textit{i.e,} $\tilde{m}(f) = m(\tilde{f})$ for each $f\in C(K)$.

\noi Note that for any $f\in C(K)$ such that $f\geq 0$ we have that $\tilde{f}(\mu) = \int_K f \ d\mu \geq 0$ for any $\mu \in P_C(K)$. Therefore $\tilde{m}(f)=m(\tilde{f})\geq 0$. Moreover, $\tilde{m}(1) = m(\tilde{1}) = m(1)=1$. Hence $\tilde{m}$ is a mean on $C(K)$.

\noi Now pick any $x\in K$, $f\in C(K)$. For any $\mu\in P_C(K)$ we have that
\begin{eqnarray*}
(L_x f)^{\tilde{}} (\mu) &=& \int_K L_x f \ d\mu\\
&=& \int_K f \ d(p_x*\mu)\\
&=& \tilde{f}(p_x*\mu) = L_{p_x}\tilde{f} (\mu).
\end{eqnarray*}
\noi Now the left invariance of $\tilde{m}$ follows since
$$ \tilde{m}(L_xf) = m((L_xf)^{\tilde{}}) = m(L_{p_x}\tilde{f}) = m(\tilde{f}) = \tilde{m}(f)\ .$$
\end{proof}

\noi In the final part of this section, we introduce the concept of introversion on a function space and explore how the introversion operators help us in acquiring an algebraic structure on $AP(K)^*$.

\begin{definition}
Let $\mathcal{F}$ be a translation-invariant linear subspace of $C(K)$. For each $\mu \in \mathcal{F}^*$ the left introversion operator $T_\mu$ determined by $\mu$ is the map $T_\mu : \mathcal{F} \ra B(K)$ defined as
$$ T_\mu f(x) := \mu(L_x f)$$
for each $x\in K$.

\vspace{0.05cm}

\noi Similarly, the right introversion operator $U_\mu$ determined by $\mu$ is the map $U_\mu : \mathcal{F} \ra B(K)$ given by
$$U_\mu f(x) := \mu (R_x f) .$$
\end{definition}

\vspace{0.06in}

Here $\mathcal{F}$ is called left-introverted if $T_\mu f \in \mathcal{F}$ for each $\mu \in \mathcal{F}^*$, $f\in \mathcal{F}$. Similarly, $\mathcal{F}$ is called right-introverted if $U_\mu f\in\mathcal{F}$ for each $\mu \in \mathcal{F}^*$, $f\in \mathcal{F}$. We denote by $\mathcal{B}_1$ the closed unit ball of $AP(K)^*$. Before we proceed further to define an algebraic structure on $AP(K)^*$, let us first explore some basic properties of introversion operators on $AP(K)$. The following property gives us a neccessary and sufficient condition for a function to be almost periodic, in terms of left and right introversion operators. Let us first quickly recall the following version of Mazur's Theorem and another important result from \cite{MI}.

\begin{theorem}[Mazur] \label{mazth}
Let $A$ be a compact subset of a Banach space $X$. Then the closed circled convex hull of $A$, denoted as $\overline{cco}(A)$, is also compact.
\end{theorem}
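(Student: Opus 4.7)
The plan is to deduce Mazur's theorem from the standard metric-space fact that, in a Banach space (hence a complete metric space), a subset is compact if and only if it is closed and totally bounded; so it suffices to show that $\mathrm{cco}(A)$ itself is totally bounded, since the closure of a totally bounded set is again totally bounded and, being closed in a complete space, compact.

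First I would dispose of the ``circling'' by a continuous-image argument. Consider the map $\Psi: \overline{\mathbb{D}}\times A \to X$ defined by $\Psi(\lambda,a) = \lambda a$, where $\overline{\mathbb{D}} = \{\lambda \in \mathbb{C}: |\lambda|\leq 1\}$. This is continuous by the joint continuity of scalar multiplication, and its domain is compact as a product of compact sets; hence the circled hull $B := \{\lambda a : |\lambda|\leq 1, a\in A\} = \Psi(\overline{\mathbb{D}} \times A)$ is compact. This reduces the problem to showing that $\mathrm{co}(B)$ is totally bounded whenever $B\subset X$ is compact.

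Next I would exploit a finite-dimensional reduction. Given $\varepsilon > 0$, pick a finite $\varepsilon/2$-net $F = \{b_1,\dots,b_n\}\subset B$ and consider the continuous affine map $\phi: \Delta_n \to X$ given by $\phi(t_1,\dots,t_n) = \sum_{i=1}^n t_i b_i$, where $\Delta_n \subset \mathbb{R}^n$ is the standard simplex. Since $\Delta_n$ is compact, $\mathrm{co}(F) = \phi(\Delta_n)$ is compact, so it admits a finite $\varepsilon/2$-net $G$. The key estimate is then: for any finite convex combination $x = \sum_j s_j c_j \in \mathrm{co}(B)$ with $c_j\in B$ and $s_j \geq 0$, $\sum_j s_j =1$, choose $b_{i(j)}\in F$ with $\|c_j - b_{i(j)}\| < \varepsilon/2$; the point $y := \sum_j s_j b_{i(j)}\in \mathrm{co}(F)$ satisfies $\|x - y\|\leq \sum_j s_j \|c_j - b_{i(j)}\| < \varepsilon/2$, and picking $g\in G$ with $\|y-g\|<\varepsilon/2$ yields $\|x-g\| < \varepsilon$. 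Thus $G$ is a finite $\varepsilon$-net for $\mathrm{co}(B)$.

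The main (and essentially only genuine) obstacle is this last approximation step: one has to notice that elements of $\mathrm{co}(B)$ are \emph{finite} convex combinations---a point that is not visually obvious in infinite-dimensional $X$---and then perform the two-stage approximation, first within $X$ to land in the finite-dimensional $\mathrm{co}(F)$ and then within that compact convex set, taking care that after replacing each $c_j$ by $b_{i(j)}$ the weights $s_j$ still sum to one so that the intermediate point $y$ is honestly in $\mathrm{co}(F)$. Once total boundedness of $\mathrm{co}(B)$ is established, completeness of $X$ upgrades the closure $\overline{\mathrm{cco}}(A)$ from closed-and-totally-bounded to compact, completing the proof.
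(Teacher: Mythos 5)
Your proof is correct. The paper does not actually prove this statement---it simply refers the reader to \cite[Appendix A]{MI}---so there is no in-paper argument to compare against; what you give is the standard and complete proof via total boundedness: the circled hull $B=\overline{\mathbb{D}}\cdot A$ is compact as a continuous image of a compact product, $\mathrm{co}(B)$ is totally bounded by the two-stage approximation through the finite-dimensional compact set $\mathrm{co}(F)=\phi(\Delta_n)$, and completeness of $X$ then makes the closure compact. The only step you leave implicit is the identification $\mathrm{cco}(A)=\mathrm{co}(B)$, i.e.\ that the convex hull of the circled hull is already circled (if $x=\sum_j s_j\lambda_j a_j$ then $\mu x=\sum_j s_j(\mu\lambda_j)a_j$ with $|\mu\lambda_j|\leq 1$), which is a one-line verification worth recording so that the reduction to $\mathrm{co}(B)$ is airtight.
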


\begin{theorem} 
Let $K$ be a semihypergroup and $\mathcal{F}$ be a translation-invariant conjugation-closed linear subspace of $B(K)$ containing constant functions. For any $f\in \mathcal{F}$, the set $\{T_\mu f : ||\mu||\leq 1\}$ is the closure of $cco(\mathcal{O}_r(f))$ in $B(K)$ with respect to the topology of pointwise convergence.
\end{theorem}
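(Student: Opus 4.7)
The plan is to realize both sides of the claimed equality as images of suitable subsets of $\mathcal{F}^*$ under a single weak*-to-pointwise continuous linear map, then apply the bipolar theorem to identify those subsets.

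First I would set up the map $\Phi: \mathcal{F}^* \to B(K)$ defined by $\Phi(\mu) := T_\mu f$. Because $L_x f \in \mathcal{F}$ for every $x \in K$ by translation-invariance, weak* convergence $\mu_\alpha \to \mu$ in $\mathcal{F}^*$ forces $T_{\mu_\alpha} f(x) = \mu_\alpha(L_x f) \to \mu(L_x f) = T_\mu f(x)$ for every $x$, so $\Phi$ is continuous from the weak* topology on $\mathcal{F}^*$ into the topology of pointwise convergence on $B(K)$. Linearity of $\Phi$ in the first argument is immediate.

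Next I would handle the inclusion of the pointwise closure of $cco(\mathcal{O}_r(f))$ inside $\{T_\mu f : \|\mu\|\leq 1\}$. For each $x \in K$, the point evaluation $\delta_x \in \mathcal{F}^*$ satisfies $\|\delta_x\|\leq 1$ and $T_{\delta_x}f(y) = \delta_x(L_y f) = f(y*x) = R_x f(y)$, so $R_x f = \Phi(\delta_x)$. A typical element $g = \sum_{i=1}^n \alpha_i R_{x_i}f$ of $cco(\mathcal{O}_r(f))$ with $\sum |\alpha_i|\leq 1$ thus equals $T_\mu f$ for $\mu := \sum \alpha_i \delta_{x_i}$, which lies in the closed unit ball $\mathcal{B}_1$ of $\mathcal{F}^*$. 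By Banach--Alaoglu, $\mathcal{B}_1$ is weak* compact, so $\Phi(\mathcal{B}_1)$ is compact, hence pointwise-closed, in $B(K)$. This forces the pointwise closure of $cco(\mathcal{O}_r(f))$ to sit inside $\Phi(\mathcal{B}_1) = \{T_\mu f : \|\mu\|\leq 1\}$.

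For the reverse inclusion I would invoke the bipolar theorem relative to the duality $\langle \mathcal{F}, \mathcal{F}^*\rangle$. Let $D := \{\delta_x : x \in K\}\subset \mathcal{F}^*$. Its polar in $\mathcal{F}$ is $D^\circ = \{f \in \mathcal{F} : |f(x)| \leq 1 \text{ for all } x\in K\}$, which is exactly the closed unit ball of $\mathcal{F}$ since $\mathcal{F}\subseteq B(K)$ carries the sup norm. Hence $D^{\circ\circ} = \mathcal{B}_1$, and the bipolar theorem yields that the weak* closure of $cco(D)$ equals $\mathcal{B}_1$. Given any $\mu \in \mathcal{B}_1$, choose a net $\mu_\alpha \in cco(D)$ with $\mu_\alpha \to \mu$ weak*; linearity of $\Phi$ puts each $\Phi(\mu_\alpha)$ in $cco(\mathcal{O}_r(f))$, and weak*-to-pointwise continuity gives $T_{\mu_\alpha}f \to T_\mu f$ pointwise, placing $T_\mu f$ in the pointwise closure of $cco(\mathcal{O}_r(f))$.

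The main conceptual hurdle is the identification of the weak* closure of $cco(D)$ with $\mathcal{B}_1$, which rests on the observation that the point evaluations $\{\delta_x\}_{x\in K}$ form a norming family for $\mathcal{F}$ under the sup norm inherited from $B(K)$. Once this is isolated, Banach--Alaoglu together with the routine continuity of $\Phi$ completes the argument.
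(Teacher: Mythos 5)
Your argument is correct and is essentially the standard one: the paper itself gives no proof but defers to the topological-semigroup case in Berglund--Junghenn--Milnes, whose argument is exactly your combination of (i) weak$^*$-to-pointwise continuity of $\mu\mapsto T_\mu f$ via translation-invariance, (ii) Banach--Alaoglu to make $\{T_\mu f:\|\mu\|\le 1\}$ pointwise compact, hence closed, and (iii) the bipolar theorem identifying $\mathcal{B}_1$ with the weak$^*$-closed circled convex hull of the evaluations $\{\delta_x\}$. Your key identity $T_{\delta_x}f=R_xf$ and the polar computation $D^\circ=B_{\mathcal{F}}$ are both right, so nothing is missing.
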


\noi A proof for the above version of Mazur's Theorem can be found in \cite[Appendix A]{MI}. The above theorem is proved for topological semigroups in \cite[Section 2.2]{MI}. The proof for semihypergroups follows along similar lines.

\begin{theorem} \label{intthm0}
Let $K$ be a semitopological semihypergroup. Then $f\in AP(K)$ if and only if the map $\mu \mapsto T_\mu f : \mathcal{B}_1 \ra \mathcal{B}(K)$ is $\sigma(\mathcal{B}_1, AP(K))$-norm continuous.
\end{theorem}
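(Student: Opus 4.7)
The plan is to pass between the weak$^*$ topology on $\mathcal{B}_1$ and the topology of pointwise convergence on $B(K)$, using the preceding Mazur-type theorem to identify $\{T_\mu f : \mu \in \mathcal{B}_1\}$ explicitly, and to exploit the elementary topological fact that on a norm-compact subset of $B(K)$ the pointwise and norm topologies coincide (since the identity is then a continuous bijection between a compact space and a Hausdorff space).

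For the direct implication, I would assume $f \in AP(K)$, so that $\mathcal{O}_r(f)$ is norm-relatively compact in $C(K)$. Mazur's theorem (Theorem \ref{mazth}) then gives that $\overline{cco}(\mathcal{O}_r(f))$ is norm-compact. The theorem stated immediately above identifies $\{T_\mu f : \mu \in \mathcal{B}_1\}$ as the closure of $cco(\mathcal{O}_r(f))$ in the pointwise topology on $B(K)$; by the topological observation just made, this closure coincides with the norm-closure and is therefore norm-compact. Now if $\mu_\alpha \to \mu$ in $\sigma(\mathcal{B}_1, AP(K))$, then using translation-invariance of $AP(K)$ (Proposition \ref{aptr}) we have $L_x f \in AP(K)$ for every $x \in K$, so $T_{\mu_\alpha} f(x) = \mu_\alpha(L_x f) \to \mu(L_x f) = T_\mu f(x)$, giving pointwise convergence of $T_{\mu_\alpha} f$ to $T_\mu f$. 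Since the entire net sits inside a norm-compact set on which pointwise and norm topologies agree, this upgrades automatically to norm convergence, as required.

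For the converse, I would invoke the Banach--Alaoglu theorem: $\mathcal{B}_1$ is $\sigma(\mathcal{B}_1, AP(K))$-compact, so the assumed weak$^*$-to-norm continuity forces the image $\{T_\mu f : \mu \in \mathcal{B}_1\}$ to be norm-compact in $B(K)$. For each $x \in K$, the evaluation functional $\delta_x : g \mapsto g(x)$ restricts to a norm-one element of $AP(K)^*$, hence lies in $\mathcal{B}_1$; a direct computation yields $T_{\delta_x} f(y) = \delta_x(L_y f) = f(y*x) = R_x f(y)$, so $T_{\delta_x} f = R_x f$. Thus $\mathcal{O}_r(f) \subset \{T_\mu f : \mu \in \mathcal{B}_1\}$, and since each $R_x f$ lies in $C(K)$ by separate continuity (Proposition \ref{prop}) and $C(K)$ is norm-closed in $B(K)$, we conclude $\mathcal{O}_r(f)$ is norm-relatively compact in $C(K)$, i.e., $f \in AP(K)$.

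The main obstacle I anticipate is the forward-direction identification of $\{T_\mu f : \mu \in \mathcal{B}_1\}$ as genuinely norm-compact in $B(K)$: this is not immediate from the preceding Mazur-type theorem alone (which gives only a pointwise-closure description), and requires combining it with Mazur's theorem on compactness of $\overline{cco}$ and the compact-to-Hausdorff homeomorphism argument. Once that identification is secured, the weak$^*$-norm continuity reduces to essentially topological bookkeeping, and the converse follows almost immediately from Banach--Alaoglu together with the $T_{\delta_x} f = R_x f$ computation.
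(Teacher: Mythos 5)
Your proposal is correct and follows essentially the same route as the paper: weak$^*$-to-pointwise continuity via translation invariance of $AP(K)$, the identification of $\{T_\mu f : \|\mu\|\le 1\}$ with the pointwise closure of $cco(\mathcal{O}_r(f))$ combined with Mazur's theorem to get norm compactness, and the coincidence of pointwise and norm topologies on that compact image; the converse likewise matches (the paper gets the inclusion $\mathcal{O}_r(f)\subset\Psi(\mathcal{B}_1)$ from the identification theorem, whereas you compute $T_{\delta_x}f=R_xf$ directly, a negligible variation). If anything, your write-up makes explicit the step the paper leaves terse, namely why the pointwise closure of $cco(\mathcal{O}_r(f))$ is norm-compact.
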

\begin{proof}
Let $f\in AP(K)$. Let $\Psi: \mathcal{B}_1 \ra B(K)$ be the map given by
$$\Psi(\mu) := T_\mu f \mbox{ for each } \mu\in  \mathcal{B}_1.$$
Let $\mu_\alpha \ra \mu$ in $ \mathcal{B}_1$ with respect to the topology $\sigma(\mathcal{B}_1, AP(K))$, \textit{i.e,} we have that $\mu_\alpha(g) \ra \mu(g)$ for each $g\in AP(K)$.

\noi Also by Proposition \ref{aptr} we know that $L_x f\in AP(K)$ for each $x\in K$. Hence in particular for each $x\in K$ we have that 
\begin{eqnarray*}
\mu_\alpha(L_x f) &\ra & \mu(L_xf) \mbox{\ \ \ \ for each } x\in K \\
\Rightarrow \hspace{0.02in} T_{\mu_\alpha}f(x) &\ra &  T_\mu f(x) \mbox{ \ \ \ \ for each } x\in K
\end{eqnarray*}
\noi Hence the map $\Psi$ is continuous when $B(K)$ is equipped with the topology of pointwise convergence.

\noi Also, $\Psi(\mathcal{B}_1)$ is the closure of $cco(\mathcal{O}_r(f))$ with respect to topology of pointwise convergence. Since $\mathcal{O}_r(f)$ is compact in $B(K)$, we have that $\overline{\Psi(\mathcal{B}_1)}$ is compact. Hence the topology of pointwise convergence coincides with the norm topology on $\Psi(\mathcal{B}_1)$.

\vspace{0.05cm}

\noi Conversely, suppose the map $\Psi:  \mathcal{B}_1 \ra B(K)$ defined above is $\sigma(\mathcal{B}_1, AP(K))$-norm continuous. Then it follows immediately that $f\in AP_r(K)= AP(K)$ since $\Psi(\mathcal{B}_1)$ is norm-compact and the following inclusion holds:
$$ \mathcal{O}_r(f) \subset \overline{cco(\mathcal{O}_r(f))}^{{ pt.wise \ topology}} = \Psi(\mathcal{B}_1) .$$
\end{proof}

\noi Of course, the right-counterpart of the above theorem holds true in a similar manner, \textit{i.e,} we can also show that a function $f$ is almost periodic if and only if the map $\mu \mapsto U_\mu f : \mathcal{B}_1 \ra \mathcal{B}(K)$ is $\sigma(\mathcal{B}_1, AP(K))$-norm continuous.

\vspace{0.05cm}

\noi Now we see that $AP(K)$ is left and right introverted, enabling us to introduce left and right Arens product \cite{AR} on $AP(K)^*$.

\begin{corollary}\label{introcor}
Let $K$ be any semitopological semihypergroup. Then $AP(K)$ is left and right introverted.
\end{corollary}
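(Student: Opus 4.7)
The plan is to realize $T_\mu f$ as a member of a norm-compact convex set that already sits inside $AP(K)$. Fix $f\in AP(K)$ and $\mu\in AP(K)^*$; by rescaling $\mu$ I may assume $\mu\in\mathcal{B}_1$, so the goal reduces to showing $T_\mu f\in AP(K)$. The argument for $U_\mu f\in AP(K)$ is entirely symmetric, obtained by replacing $T_\mu$, $\mathcal{O}_r$ by their right-handed counterparts $U_\mu$, $\mathcal{O}_l$ throughout and invoking Corollary \ref{apl=r} at the end.

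The first step is to apply the theorem (stated just before Theorem \ref{intthm0}) that identifies $\{T_\nu f : \nu\in\mathcal{B}_1\}$ with the pointwise closure of $cco(\mathcal{O}_r(f))$. I would verify that $\mathcal{F}=AP(K)$ satisfies the hypotheses of that theorem: it is translation-invariant by Proposition \ref{aptr}, and is a conjugation-closed, norm-closed linear subspace of $C(K)\subseteq B(K)$ containing the constants by the basic proposition recalled earlier in this section. That theorem then yields
$$
\{T_\nu f : \nu\in\mathcal{B}_1\} \;=\; \overline{cco(\mathcal{O}_r(f))}^{\,\mathrm{pt}} \qquad \text{in } B(K),
$$
where the superscript denotes the closure in the topology of pointwise convergence. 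In particular $T_\mu f$ lies in this pointwise closure.

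The second step upgrades the pointwise closure to a norm closure by Mazur's Theorem \ref{mazth}. Since $f\in AP_r(K)$, the norm-closure $\overline{\mathcal{O}_r(f)}^{\,\|\cdot\|}$ is norm-compact, so Mazur gives that the closed circled convex hull $\overline{cco(\mathcal{O}_r(f))}^{\,\|\cdot\|}$ is norm-compact as well. Because the sup-norm topology on $B(K)$ is finer than pointwise convergence and both are Hausdorff, the two topologies must coincide on this norm-compact set, which in particular is closed in pointwise convergence. Combining this with the trivial inclusion induced by norm-convergence implying pointwise convergence, we conclude
$$
\overline{cco(\mathcal{O}_r(f))}^{\,\mathrm{pt}} \;=\; \overline{cco(\mathcal{O}_r(f))}^{\,\|\cdot\|}.
$$

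The final step is to note that, by Proposition \ref{aptr}, $\mathcal{O}_r(f)\subseteq AP(K)$, and that $AP(K)$ is a norm-closed linear subspace of $C(K)$; hence $AP(K)$ contains $\overline{cco(\mathcal{O}_r(f))}^{\,\|\cdot\|}$. Chaining the three relations, $T_\mu f\in AP(K)$, as required. I do not anticipate any serious obstacle: the essential move is the pointwise-to-norm upgrade granted by Mazur's Theorem, combined with the translation-invariance and norm-closedness of $AP(K)$.
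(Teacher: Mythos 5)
Your proposal is correct and follows essentially the same route as the paper: identify $\{T_\nu f:\nu\in\mathcal{B}_1\}$ with the pointwise closure of $cco(\mathcal{O}_r(f))$ via the theorem preceding Theorem \ref{intthm0}, use Mazur's Theorem \ref{mazth} and the compactness of $\overline{\mathcal{O}_r(f)}$ to show this closure coincides with the norm closure, and conclude from the translation-invariance and norm-closedness of $AP(K)$. You merely make explicit the pointwise-to-norm upgrade that the paper delegates to the proof of Theorem \ref{intthm0}, and your handling of the right-introverted case via Corollary \ref{apl=r} matches the paper's ``follows similarly.''
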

\begin{proof}
We know that $AP(K)$ is a translation-invariant linear subspace of $C(K)$. Now pick any $f\in AP(K)$ and again consider the function $\Psi: \mathcal{B}_1 \ra B(K)$ given by
$$\Psi(\mu) := T_\mu f \mbox{ for each } \mu\in  \mathcal{B}_1.$$ 
As pointed out in the preceding proof, we have that the norm topology coincides with the topology of pointwise convergence on $\Psi(\mathcal{B}_1)$ and hence finally we have that
$$ \Psi(\mathcal{B}_1) = \overline{cco} (\mathcal{O}_r(f)) \subset AP(K) .$$
Hence after scaling by proper scalars we have that $T_\mu f \in AP(K)$ for each $\mu \in AP(K)^*$, as required.

\noi The proof for the right-counterpart follows similarly.
\end{proof}

\begin{theorem} \label{intro00}
Let $K$ be a semitopological semihypergroup and $f\in AP(K)$.  Then the map $\Phi: \mathcal{B}_1 \times \mathcal{B}_1 \ra \mathbb{C}$ given by
$$ \Phi(\mu, \nu) := \mu(T_\nu f)$$
is continuous with respect to the topology $\sigma(\mathcal{B}_1, AP(K)) \times \sigma(\mathcal{B}_1, AP(K))$.

\vspace{0.05cm}

\noi Moreover, for any $\mu, \nu \in AP(K)^*$ we have that
$$\mu(T_\nu f) = \nu (U_\mu f).$$
\end{theorem}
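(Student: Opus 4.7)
The plan is to treat the two assertions separately, reducing both to the weak*-to-norm continuity of $\nu \mapsto T_\nu f$ supplied by Theorem \ref{intthm0} combined with the introversion afforded by Corollary \ref{introcor}.

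For the joint continuity of $\Phi$, I would take a net $(\mu_\alpha, \nu_\alpha) \to (\mu, \nu)$ in the product weak* topology on $\mathcal{B}_1 \times \mathcal{B}_1$ and use the triangle estimate
\[ |\mu_\alpha(T_{\nu_\alpha} f) - \mu(T_\nu f)| \leq \|T_{\nu_\alpha} f - T_\nu f\|_\infty + |(\mu_\alpha - \mu)(T_\nu f)|. \]
The first summand tends to $0$ because $\nu_\alpha \to \nu$ weak* and Theorem \ref{intthm0} gives weak*-to-norm continuity of $\nu \mapsto T_\nu f$ on $\mathcal{B}_1$; the second summand tends to $0$ because $T_\nu f \in AP(K)$ by Corollary \ref{introcor} and $\mu_\alpha \to \mu$ in $\sigma(\mathcal{B}_1, AP(K))$.

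For the identity $\mu(T_\nu f) = \nu(U_\mu f)$, I would first verify it on point masses. A direct calculation gives $T_{\epsilon_y} f(x) = \epsilon_y(L_x f) = L_x f(y) = \int_K f\, d(p_x * p_y) = R_y f(x)$, so $T_{\epsilon_y} f = R_y f$, whence $\mu(T_{\epsilon_y} f) = \mu(R_y f) = U_\mu f(y) = \epsilon_y(U_\mu f)$ for every $\mu \in AP(K)^*$ and every $y \in K$. To extend to arbitrary $\nu$, I would fix $\mu$ and consider the linear functional $G_\mu(\nu) := \mu(T_\nu f) - \nu(U_\mu f)$ on $AP(K)^*$. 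The map $\nu \mapsto \nu(U_\mu f)$ is weak*-continuous on all of $AP(K)^*$ since $U_\mu f \in AP(K)$ by the right-hand analog of Corollary \ref{introcor}. The map $\nu \mapsto \mu(T_\nu f)$ is weak*-to-norm continuous on $\mathcal{B}_1$ by Theorem \ref{intthm0}, and the Krein-Smulian theorem promotes this to weak*-continuity on all of $AP(K)^*$. Hence $G_\mu$ is weak*-continuous; since it vanishes on every $\epsilon_y$ and the weak*-closed linear span of $\{\epsilon_y : y \in K\}$ equals $AP(K)^*$ (any $f \in AP(K)$ annihilated by every point evaluation is identically zero, so Hahn--Banach finishes the density), we conclude $G_\mu \equiv 0$.

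The main obstacle is the customary topology-juggling in the second part: Theorem \ref{intthm0} supplies weak*-to-norm continuity only on the unit ball, so the Krein--Smulian lift from $\mathcal{B}_1$ to all of $AP(K)^*$ is the delicate step. Once that is in hand, the elementary identity $T_{\epsilon_y} f = R_y f$ together with weak*-density of Diracs closes both parts of the theorem.
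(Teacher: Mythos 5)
Your proof of the joint continuity of $\Phi$ is exactly the paper's argument: the same triangle estimate, with the first summand controlled by the $\sigma(\mathcal{B}_1, AP(K))$-to-norm continuity of $\nu \mapsto T_\nu f$ from Theorem \ref{intthm0} and the second by the fact that $T_\nu f \in AP(K)$; you are in fact a bit more explicit than the paper about why the second summand requires Corollary \ref{introcor}. For the identity $\mu(T_\nu f) = \nu(U_\mu f)$ the two arguments genuinely diverge. The paper verifies the identity only on pairs of evaluation functionals $E_x, E_y$, establishes the joint $\sigma(\mathcal{B}_1, AP(K))$-continuity of the second form $\Phi'(\mu,\nu) = \nu(U_\mu f)$ as well, and then passes to arbitrary $\mu,\nu \in \mathcal{B}_1$ using bilinearity together with the fact that the weak$^*$ closure of $cco(\mathcal{E}(K))$ is all of $\mathcal{B}_1$ (citing \cite{MI}), finishing by scaling. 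You instead prove the stronger base case $T_{E_y}f = R_y f$, which yields the identity for \emph{arbitrary} $\mu$ against a Dirac $\nu$, and then extend in the single variable $\nu$ by showing that the linear functional $G_\mu(\nu) = \mu(T_\nu f) - \nu(U_\mu f)$ is weak$^*$-continuous --- using Theorem \ref{intthm0} on $\mathcal{B}_1$, the right-introversion $U_\mu f \in AP(K)$, and the Krein--Smulian lift from the ball to all of $AP(K)^*$ --- and vanishes on the weak$^*$-dense linear span of the point evaluations. Your route spares you both the joint continuity of $\Phi'$ and the convex-hull density statement, at the cost of invoking Krein--Smulian (legitimately, since $AP(K)$ is a norm-closed, hence Banach, subspace of $C(K)$); both arguments are correct and both ultimately rest on the same two ingredients, Theorem \ref{intthm0} and the introversion of $AP(K)$.
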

\begin{proof}
\noi Pick any $(\mu_0, \nu_0)\in \mathcal{B}_1 \times \mathcal{B}_1$ and consider the function $\Psi$ as in Theorem \ref{intthm0}. Then for any $\mu, \nu \in \mathcal{B}_1$ we have that
\begin{eqnarray*}
|\Phi(\mu, \nu) - \Phi(\mu_0, \nu_0)| &=& |\mu(T_\nu f) - \mu_0(T_{\nu_0}f)|\\
&\leq & ||\Psi(\nu) - \Psi(\nu_0)|| + |(\mu-\mu_0) (\Psi(\nu_0))|
\end{eqnarray*}
\noi Since $\Psi$ is $\sigma(\mathcal{B}_1, AP(K))$-norm continuous, we have that both the terms in the last inequality tends to zero whenever $(\mu, \nu)$ tends to $(\mu_0, \nu_0)$ in $\mathcal{B}_1 \times \mathcal{B}_1$ with respect to the topology $\sigma(\mathcal{B}_1, AP(K)) \times \sigma(\mathcal{B}_1, AP(K))$.

\vspace{0.03in}

\noi In a similar way, we can show that the map $\Phi': \mathcal{B}_1 \times \mathcal{B}_1 \ra \mathbb{C}$ given by
$$ \Phi'(\mu, \nu) := \nu(U_\mu f)$$
is continuous with respect to the topology $\sigma(\mathcal{B}_1, AP(K)) \times \sigma(\mathcal{B}_1, AP(K))$.

\noi Now for any $x\in K$ consider the evaluation map $E_x \in AP(K)^*$ given by 
$$ E_x (f) := f(x) \mbox{ \ \ for each } f\in AP(K) .$$
We denote the set of all evaluation maps in $AP(K)^*$ as $\mathcal{E}(K)$,\textit{ i.e,} we have 
$$\mathcal{E}(K) := \{E_x : x\in K \}. $$
\noi Pick any $\mu= E_x, \nu = E_y$ for some $x, y \in K$. Then we have
\begin{eqnarray*}
\mu(T_\nu f)&=& E_y (L_x f)\\
&=& f(x*y)\\
&=& E_x(R_y f)\\
&=& E_y(U_\mu f) = \nu(U_\mu f).
\end{eqnarray*}
Since $\mathcal{E}(K)$ serves as the set of extreme points, we have that the weak$^*$ closure of $cco(\mathcal{E}(K))$ equals $\mathcal{B}_1$ \cite{MI}. Hence it follows from the continuity of the maps $\Phi$ and $\Phi'$ that 
$$\mu(T_\nu f) = \nu (U_\mu f) $$
for any $\mu, \nu \in \mathcal{B}_1$ and hence by proper scaling, for any $\mu, \nu \in AP(K)^*$.
\end{proof}

\noi Note that the above theorem holds true even if we replace $AP(K)$ by any translation invariant conjugation-closed linear subspace $\mathcal{F}$ of $C(K)$ containing constant functions.

\noi Now let us define a product $\star$ on $AP(K)^*$ given by
$$\mu \star \nu (f) := \mu(T_\nu f).$$
for any $(\mu, \nu) \in AP(K)^* \times AP(K)^*$.

\noi Now recall the construction of the left Arens product $\lozenge$ for any Banach algebra $X$ \cite{AR} and apply it on $AP(K)^*$. For any $\mu, \nu \in AP(K)^*$, $f\in AP(K)$ and $x, y \in K$ we have that
\begin{eqnarray*}
f\lozenge x (y) &:=& f(x*y).\\
\nu \lozenge f (x) &:=& \nu(f\lozenge x).\\
\mu \lozenge \nu (f) &:=& \mu (\nu \lozenge f).
\end{eqnarray*}

\noi Hence in particular, we have that
\begin{eqnarray*}
f\lozenge x (y) &=& L_xf(y).\\
\nu \lozenge f (x) &=& \nu(L_xf) \ = \ T_\nu f(x).\\
\mu \lozenge \nu (f) &=& \mu(T_\nu f) \ = \ \mu \star \nu (f).
\end{eqnarray*}
\noi Note that here the second step is possible since $AP(K)$ is left translation-invariant and the last step is possible since $AP(K)$ is left introverted.

\noi Thus the left Arens product coincides with $\star$ on $AP(K)^*$, and hence $(AP(K)^*, \star)$ becomes a Banach algebra.

\noi Now in a similar manner, consider the right Arens product $\square$ on $AP(K)^*$. Then we have that
\begin{eqnarray*}
x\square f (y) &=& f(y*x) \ = \ R_xf(y).\\
f\square \mu (x) &=& \mu(x\square f) \ = \ \mu(R_xf) \ = \ U_\mu f(x).\\
\mu \square \nu (f) &=& \nu(f\square \mu) \ = \ \nu(U_\mu f).
\end{eqnarray*}
Thus in view of Theorem \ref{intro00} we see that the left and right Arens products coincide on $AP(K)^*$, \textit{i.e,} $AP(K)^*$ is Arens regular.

\section{Homomorphisms and Ideals}

\noi In this section, we first introduce a more general form of homomorphism in (semitopological) semihypergroups. Later we introduce the concept of an ideal in a semihypergroup and obtain some basic properties and relations between (closed) ideals and homomorphisms. Finally, we investigate the structure of the kernel of a compact (semitopological) semihypergroup and the connection between minimal left ideals and amenability in compact semihypergroups. Unless otherwise mentioned, $K$ and $H$ will denote semitopological semihypergroups.

\begin{definition} \label{homodef}
A continuous map $\phi:K\ra H$ is called a homomorphism if for any Borel measurable function $f$ on $H$ and for any $x, y \in K$ we have that
$$ f\circ \phi(x*y) = f(\phi(x)*\phi(y)) .$$

\vspace{0.05cm}

\noi A homomorphism $\phi$ between $K$ and $H$ is called an isomorphism if the map $\phi$ is bijective.
\end{definition}

\begin{remark} 
\noi In $1975$ Jewett introduced the concept of orbital morphisms for hypergroups in \cite{JE}. If $K$ and $J$ are two hypergroups, then a map $\phi: K \ra J$ is called proper if $\phi^{-1}(C)$ is compact in $K$ for every compact set $C$ in $J$. A recomposition of $\phi$ was defined to be a continuous map from $J \ra M^+(K)$ defined as $x\mapsto q_x$ such that $supp (q_x) = \phi^{-1}(\{x\})$.

\noi Roughly speaking, a proper open map $\phi:K \ra J$ is called a orbital homomorphism if for any $x, y \in J$ we have that $p_x*p_y = \phi(q_x*q_y)$ as measures, \textit{i.e,} for any Borel measurable function $f$ on $J$ we have that $\int_J f \ d(p_x *p_y) = \int_K f\circ \phi \ d(q_x *q_y)$.

\noi Jewett defined homomorphism between hypergroups as a special case of orbital morphisms. According to the definition in \cite{JE} a continuous open map $\phi:K \ra J$ is called a homomorphism if $\phi(p_x*p_y) = p_{\phi(x)} * p_{\phi(y)}$ for any $x, y \in K$.

\noi Note that our definition of homomorphism is equivalent to that of Jewett's apart from the fact that in Definition \ref{homodef} we do not need the map $\phi$ to be open and proper, and there the map was defined between two (semitopological) semihypergroups, instead of hypergroups.

\noi Otherwise these two definitions coincide since similar to the definition of orbital morphisms, Jewett's definition of homomorphism implies that for any measurable function $f$ on $J$, $x, y \in K$ we have that $\int_J f \ d(p_{\phi(x)}*p_{\phi(y)}) = \int_K f\circ \phi \ d(p_x * p_y)$. Hence by definition $f(\phi(x) * \phi(y)) = f\circ \phi(x*y)$.
\end{remark}

\noi Now we check whether some of the basic properties of the classical case hold for this definition as well. Since in the classical case the multiplication of two points simply gives us another point, these properties hold trivially in that case. 

\begin{lemma} \label{lemma1}
Let $K$ and $H$ be (semitopological) semihypergroups, and $\phi: K \ra H$ a homomorphism. Then for any $x, y \in K$, for almost all $z\in supp(p_x *p_y)$ with respect to the measure $(p_x*p_y)$ we have  
$$\phi(z) \in supp (p_{\phi(x)}* p_{\phi(y)}).$$ 
Conversely, for any $x,y,z\in K$, for almost all $\phi(z) \in supp (p_{\phi(x)} *p_{\phi(y)})$ with respect to the measure $(p_{\phi(x)}*p_{\phi(y)})$ we have 
$$z\in \phi^{-1}\phi( supp (p_x*p_y)).$$
\end{lemma}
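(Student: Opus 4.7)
The plan is to extract the content of the homomorphism hypothesis as an equality of Borel measures and then reduce both halves of the lemma to routine support computations. By Definition \ref{homodef}, for every Borel measurable $f$ on $H$ and every $x,y\in K$,
\[
\int_K f\circ\phi\ d(p_x*p_y) \;=\; f\circ\phi(x*y) \;=\; f(\phi(x)*\phi(y)) \;=\; \int_H f\ d(p_{\phi(x)}*p_{\phi(y)}).
\]
So the pushforward measure $\phi_*(p_x*p_y)$ coincides with $p_{\phi(x)}*p_{\phi(y)}$ on $H$. This is the single identity that drives both parts; the first thing I would do is record it as a preliminary observation, since it converts every subsequent question about images/preimages of supports into a one-line outer-measure estimate.

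For the forward direction, fix $x,y\in K$ and set $V:=\mathrm{supp}(p_{\phi(x)}*p_{\phi(y)})$. Because $V$ is closed in $H$, the set $E:=\{z\in K:\phi(z)\notin V\}=\phi^{-1}(H\setminus V)$ is Borel in $K$. I would then compute
\[
(p_x*p_y)(E)\;=\;\phi_*(p_x*p_y)(H\setminus V)\;=\;(p_{\phi(x)}*p_{\phi(y)})(H\setminus V)\;=\;0,
\]
which is precisely the statement that $\phi(z)\in V$ for $(p_x*p_y)$-a.e.\ $z$, and hence in particular for a.e.\ $z\in\mathrm{supp}(p_x*p_y)$.

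For the converse, write $S:=\mathrm{supp}(p_x*p_y)$. The key preliminary is that $S$ is compact (axiom (A3)), so by continuity of $\phi$ the image $\phi(S)$ is compact, hence closed, hence Borel in $H$. The ``bad'' set inside $\mathrm{supp}(p_{\phi(x)}*p_{\phi(y)})$ consists of those $w$ not lying in $\phi(S)$, and since $\phi^{-1}(H\setminus\phi(S))=K\setminus\phi^{-1}\phi(S)\subseteq K\setminus S$, I would estimate
\[
(p_{\phi(x)}*p_{\phi(y)})(H\setminus\phi(S))\;=\;(p_x*p_y)(K\setminus\phi^{-1}\phi(S))\;\leq\;(p_x*p_y)(K\setminus S)\;=\;0.
\]
Thus almost every point of $\mathrm{supp}(p_{\phi(x)}*p_{\phi(y)})$ lies in $\phi(S)$, and so whenever such a point is written as $\phi(z)$ the element $z$ belongs to $\phi^{-1}\phi(S)$ as claimed.

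The only genuine subtlety is measurability: the statement involves $\phi(S)$, which for a continuous map from a general locally compact space need not be Borel unless one knows $S$ is compact. The main obstacle is therefore not analytic but bookkeeping — remembering to invoke axiom (A3) to ensure $\phi(S)$ is automatically closed, and using the pushforward identity $\phi_*(p_x*p_y)=p_{\phi(x)}*p_{\phi(y)}$ only on genuinely Borel sets. Once these are in place both inclusions become the same one-line change-of-variables.
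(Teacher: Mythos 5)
Your proposal is correct and is essentially the paper's own argument in different packaging: the paper tests the homomorphism identity against the indicator functions of $H\setminus\mathrm{supp}(p_{\phi(x)}*p_{\phi(y)})$ and of $H\setminus\phi(\mathrm{supp}(p_x*p_y))$, which is exactly your pushforward identity $\phi_*(p_x*p_y)=p_{\phi(x)}*p_{\phi(y)}$ evaluated on those two Borel sets. Your explicit remark that compactness of $\mathrm{supp}(p_x*p_y)$ (axiom (A3)) makes $\phi(\mathrm{supp}(p_x*p_y))$ closed, hence Borel, is a welcome detail the paper leaves implicit when it declares its function $h$ measurable.
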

\begin{proof}
Pick any $x,y \in K$ and let $z\in supp(p_x*p_y)$. Now set $A:= supp(p_{\phi(x)} *p_{\phi(y)}) \subset H$ and define a measurable function $g$ on $H$ such that $g\equiv 0$ on $A$, $g\equiv1$ on $A^c$. Then we have that
\begin{eqnarray*}
0 = \int_A g \ d(p_{\phi(x)} *p_{\phi(y)}) &=& \int_H g \ d(p_{\phi(x)} *p_{\phi(y)})\\
&=& g(\phi(x)*\phi(y))\\
&=& g\circ\phi(x*y)= \int_K g\circ\phi \ d(p_x*p_y).
\end{eqnarray*}
\noi For any $z\in {supp}(p_x*p_y)$, for the relation $\int_K (g\circ\phi) \ d(p_x*p_y) =0$ to hold true we must have that $g\circ\phi (z) =0$ almost everywhere on $supp(p_x*p_y)$ with respect to $(p_x*p_y)$. Thus by construction of $g$ we have that $\phi(z) \in A$ for almost all $z\in supp(p_x*p_y)$ as required.

\noi Now to prove the converse, pick any $x, y \in K$ and let $z\in K$ be such that $\phi(z)\in supp(p_{\phi(x)} *p_{\phi(y)})$.

\noi Set $A:= supp(p_x*p_y)$ and define a measurable function $h$ on $H$ such that $h \equiv 0$ on $\phi(A)$ and $h\equiv 1$ on $\phi(A)^c$. Note that for any $u\in A$ we have that $\phi(u)\in \phi(A)$ and hence $h\circ\phi(u) =0$. Thus we have
\begin{eqnarray*}
0 = \int_A h\circ \phi(u) \ d(p_x*p_y) &=& \int_K h\circ\phi \ d(p_x*p_y)\\
&=&h\circ\phi(x*y)\\
&=& h(\phi(x)*\phi(y))= \int_H h \ d(p_{\phi(x)} *p_{\phi(y)}).
\end{eqnarray*}
\noi where the first equality follows from the construction of $h$ and the second equality follows from the construction of $A$.

\noi Now if $z\in A$, then the result follows trivially. Suppose $z$ does not lie in $A$. Since $\phi(z) \in supp(p_{\phi(x)} *p_{\phi(y)}) \subset H$, for the relation $\int_H h \ d(p_{\phi(x)} *p_{\phi(y)}) =0$ to be true we must have that $h(\phi(z))=0$ for almost all $\phi(z)\in supp(p_{\phi(x)} *p_{\phi(y)})$ with respect to $(p_{\phi(x)} *p_{\phi(y)})$. Thus almost all $\phi(z) \in \phi(A)$ and so $z\in \phi^{-1}\phi(A)$ as required.
\end{proof}

\begin{remark}
Roughly speaking, the above lemma implies that for an isomorphism $\phi:K \ra H$ we have the following:

\noi For any $x, y \in K$ we have that $z\in supp (p_x*p_y)$ if and only if $\phi(z) \in supp (p_{\phi(x)}*p_{\phi(y)})$, up to a set of measure zero.
\end{remark}

\begin{proposition}
Let $\phi:K \ra H$ be a homomorphism. Then $\phi(K)$ is a semihypergroup.
\end{proposition}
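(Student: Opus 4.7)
The plan is to endow $\phi(K)$ with the subspace topology from $H$ and with the convolution inherited from $H$, and then verify each of the axioms (A1)--(A4) of Definition \ref{shyper}. For $a = \phi(x)$ and $b = \phi(y)$ in $\phi(K)$, the point masses $p_a, p_b$ depend only on the points themselves, so the natural definition $p_a * p_b := p_{\phi(x)} * p_{\phi(y)}$ (with the right-hand side computed in $M(H)$) is unambiguous; the central task is then to show that this measure is actually supported in $\phi(K)$.

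The key step is the support containment, which is where Lemma \ref{lemma1} does the work. By its converse direction, the complement of $\phi(\mathrm{supp}(p_x*p_y))$ carries zero $(p_{\phi(x)}*p_{\phi(y)})$-mass. Since $\mathrm{supp}(p_x*p_y)$ is compact in $K$ by axiom (A3) applied to $K$, its continuous image $\phi(\mathrm{supp}(p_x*p_y))$ is compact, hence closed in the Hausdorff space $H$. Its complement is therefore an open set of measure zero, so by definition of support we conclude
\[\mathrm{supp}(p_{\phi(x)}*p_{\phi(y)}) \ \subseteq \ \phi(\mathrm{supp}(p_x*p_y)) \ \subseteq \ \phi(K).\]
This simultaneously verifies axiom (A3) for $\phi(K)$, since $p_{\phi(x)}*p_{\phi(y)}$ is already known to be a probability measure with compact support in $H$.

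With the support issue resolved, the remaining axioms are inherited from $H$ by restriction. Associativity of the bilinear extension of $*$ to $M(\phi(K))$ descends from associativity in $M(H)$. Positive continuity (A2) follows since the convolution $M^+(H) \times M^+(H) \ra M^+(H)$ restricts continuously to $M^+(\phi(K)) \times M^+(\phi(K))$, the image landing in $M^+(\phi(K))$ by the previous step. For axiom (A4), the continuity of $(a,b)\mapsto \mathrm{supp}(p_a*p_b)$ on $\phi(K)\times\phi(K)$ with values in $\mathfrak{C}(\phi(K))$ follows from the corresponding continuity on $H\times H$, together with the continuity of $\phi$ and the compatibility of the Michael topology with the subspace $\phi(K) \subseteq H$.

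The hardest step, I expect, will be verifying the topological part of axiom (A1)---that $\phi(K)$ itself is locally compact Hausdorff. Hausdorffness is automatic as a subspace of $H$, but local compactness is not, since continuous images of locally compact spaces need not be locally compact, and the definition of homomorphism here (unlike Jewett's for hypergroups) does not require $\phi$ to be open or proper. The natural approach is to exploit the compactness of $\phi(V)$ for each compact neighborhood $V$ in $K$ and argue that such images yield compact neighborhoods in $\phi(K)$, perhaps invoking axiom (A4) on $H$ together with the support relations above. Should this obstacle prove insurmountable without extra hypotheses on $\phi$, the natural fallback is to work with $\overline{\phi(K)}$ in $H$ instead, which is automatically locally compact.
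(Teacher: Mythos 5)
Your proof is correct, and on the central point it is actually tighter than the paper's. The paper makes the same reduction --- everything comes down to the closure property $\phi(K)*\phi(K)\subset\phi(K)$ --- but proves it by contradiction: it sets $A=supp(p_{\phi(a)}*p_{\phi(b)})$, integrates the indicator function of $A\setminus\phi(K)$, shows via the homomorphism identity and Lemma \ref{lemma1} that the integral vanishes, and then asserts that the nonempty set $A\setminus\phi(K)\subset supp(p_{\phi(a)}*p_{\phi(b)})$ cannot be null. That last assertion is only automatic when the set is relatively open in the support, i.e.\ when $\phi(K)$ is closed, which is not assumed. Your route --- extracting from the converse half of Lemma \ref{lemma1} (really from its proof, where $\int_H h\,d(p_{\phi(x)}*p_{\phi(y)})=0$ for $h$ the indicator of $\phi(supp(p_x*p_y))^c$) that this complement is null, and then observing that $\phi(supp(p_x*p_y))$ is compact, hence closed, so its complement is an \emph{open} null set and therefore disjoint from the support --- avoids this issue entirely and yields the sharper containment $supp(p_{\phi(x)}*p_{\phi(y)})\subseteq\phi(supp(p_x*p_y))$. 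Your closing worry about local compactness of $\phi(K)$ is legitimate but is not something the paper resolves either: its proof opens by declaring that one ``only'' needs closure under convolution, implicitly reading the conclusion as ``$\phi(K)$ is closed under the convolution inherited from $H$'' rather than as a literal verification of (A1)--(A4); so you need not feel obliged to close that gap, though your fallback to $\overline{\phi(K)}$ (which is closed under convolution, since by (A4) and the definition of the Michael topology the collection of compact sets contained in a fixed closed set is closed) is the natural repair if one insists on axiom (A1).
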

\begin{proof}
In order to verify this, we only need to check that $\phi(K)$ is closed under convolution, \textit{i.e,} $\phi(K)*\phi(K) \subset \phi(K)$.

\noi Pick $a, b \in K$ and set $A:= supp(p_{\phi(a)} * p_{\phi(b)})$. If possible, suppose $A\nsubseteq \phi(K)$ so that  $A \setminus \phi(K) \neq \O$. Now define a measurable function $f$ on $H$ such that $f \equiv 1$ on $A\setminus \phi(K)$ and $f \equiv 0$ on $A \cap \phi(K)$. Then we have
\begin{eqnarray*}
\int_{A\setminus \phi(K)} f \ d(p_{\phi(a)} * p_{\phi(b)}) = \int_A f \ d(p_{\phi(a)} * p_{\phi(b)}) &=& \int_H f \ d(p_{\phi(a)} * p_{\phi(b)})\\
&=& f(\phi(a)*\phi(b))\\
&=& f\circ\phi(a*b) = \int_K f\circ\phi \ d(p_a*p_b) = 0.
\end{eqnarray*}
\noi where the first equality holds since $f\equiv 0$ on $A\cap \phi(K)$, the second equality follows from the construction of $A$ and the last equality holds since for almost all $z\in supp(p_a*p_b)$ we have from Lemma \ref{lemma1} that $\phi(z) \in A$. Thus $\phi(z) \in A \cap \phi(K)$ and hence $f\circ \phi(z) = 0$.

\noi But $f\equiv 1$ on $A\setminus \phi(K)$ and $A\setminus\phi(K) \subset supp(p_{\phi(a)} * p_{\phi(b)})$. So $A\setminus \phi(K)$ can not be a set of $(p_{\phi(a)} * p_{\phi(b)})$-measure zero and hence $\int_{A\setminus \phi(K)} f \ d(p_{\phi(a)} * p_{\phi(b)}) \neq 0$, and we arrive at a contradiction.
\end{proof}
We now introduce and study ideals in (semitopological) semihypergroups.
\begin{definition}
Let $K$ be a (semitopological) semihypergroup. A subset $I\subset K$ is called a left ideal of $K$ if for any $a\in I$, $x\in K$ we have that $p_x*p_a (I) =1$.

\vspace{0.03cm}

\noi Equivalently, a subset $I\subset K$ is called a left ideal of $K$ if for any $a\in I$, $x\in K$ and any Borel measurable set $E\subset K$ such that $E\cap I = \O$, we have that $p_x*p_a(E) = 0$.
\end{definition}

\noi Note that a closed subset $I\subset K$ is a left ideal in $K$ if and only if for any $x\in K$, $a\in I$ we have that $supp(p_x*p_a)\subset I$. Similar definitions and comments apply to right ideals in $K$. Throughout this section, we discuss the properties of left ideals and their relations to several algebraic and analytic objects associated to $K$. All these results have similar counterparts for right ideals too. Henceforth all ideals will be assumed to be closed, unless otherwise specified.

\begin{proposition} \label{iex}
Let $K$ be a semitopological semihypergroup. Pick any $a\in K$. Then $I:=K*\{a\}$ is a left ideal in $K$.
\end{proposition}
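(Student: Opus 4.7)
The plan is to establish the left-ideal condition directly via supports. For every $b \in I$ and every $x \in K$, I will show $supp(p_x*p_b) \subset I$; since $p_x*p_b$ is a probability measure concentrated on a compact subset of $I$, this at once yields $(p_x*p_b)(E) = 0$ for every Borel set $E$ with $E \cap I = \emptyset$, which is precisely the alternative characterization of a left ideal stated just before the proposition.

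First I would unpack the hypothesis: $b \in K*\{a\}$ means $b \in supp(p_y*p_a)$ for some $y \in K$. The core of the proof is to transfer the associativity of the measure convolution to the set-level operation. By axiom (A1), $(p_x*p_y)*p_a = p_x*(p_y*p_a)$ in $M(K)$. Taking supports of both sides and using the formula $supp(\mu*p_c) = supp(\mu)*\{c\}$ together with its left counterpart $supp(p_c*\mu) = \{c\}*supp(\mu)$, this specializes to the set-level identity
$$supp(p_x*p_y)*\{a\} \ = \ \{x\}*supp(p_y*p_a).$$
Since $b \in supp(p_y*p_a)$, it follows at once that
$$supp(p_x*p_b) \ = \ \{x\}*\{b\} \ \subset \ \{x\}*supp(p_y*p_a) \ = \ supp(p_x*p_y)*\{a\} \ \subset \ K*\{a\} \ = \ I,$$
finishing the argument.

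The only genuine obstacle is the support formula $supp(\mu*p_c) = supp(\mu)*\{c\}$ for $\mu \in P_c(K)$. Writing $\mu*p_c = \int_K (p_t*p_c)\, d\mu(t)$, one inclusion is immediate from the definition of support. Conversely, if $r \in supp(p_t*p_c)$ with $t \in supp(\mu)$, then axiom (A4) (continuity of $(u,v) \mapsto supp(p_u*p_v)$ into the Michael space $\mathfrak{C}(K)$) combined with positivity of $\mu$ on every neighborhood of $t$ forces every open neighborhood of $r$ to carry positive $\mu*p_c$-mass. Moreover, since $supp(\mu)$ is compact and $t \mapsto supp(p_t*p_c)$ is Michael-continuous, the union $supp(\mu)*\{c\} = \bigcup_{t \in supp(\mu)} supp(p_t*p_c)$ is itself compact, so no closure is required in the formula. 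This is where the bulk of the technical work sits; everything else in the argument is a direct application of the axioms and one invocation of associativity.
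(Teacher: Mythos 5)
Your proposal is correct and follows essentially the same route as the paper: identify $b\in supp(p_{y}*p_a)$ for some $y$, then use associativity of the set convolution to get $supp(p_x*p_b)=\{x\}*\{b\}\subset (\{x\}*\{y\})*\{a\}\subset K*\{a\}$. The only difference is that the paper invokes set-level associativity directly (it is available from Jewett's support formula $supp(\mu*\nu)=supp(\mu)*supp(\nu)$ for compactly supported measures), whereas you rederive the needed instance from measure-level associativity --- a harmless elaboration of the same argument.
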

\begin{proof}
Pick any $x\in K$ and $b\in I$. Since $I = \cup_{y\in K} supp(p_y*p_a)$ there exists some $y_0\in K$ such that $b\in supp(p_{y_0}*p_a)= \{y_0\}*\{a\}$. Now the result follows as
\begin{eqnarray*}
supp(p_x*p_b) = \{x\} * \{b\} &\subset& \{x\} * (\{y_0\}*\{a\})\\
&=& (\{x\}*\{y_0\}) * \{a\} \subset K*\{a\} = I.
\end{eqnarray*}
\end{proof}

\begin{proposition}
Let $\phi:K \ra H$ be a homomorphism  and $I\subset K$ a left ideal. Then $\phi(I)$ is also a left ideal in $\phi(K)$.
\end{proposition}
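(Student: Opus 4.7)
The plan is to verify the left ideal condition for $\phi(I)$ inside the semihypergroup $\phi(K)$ directly, via the equivalent characterization phrased in terms of Borel sets disjoint from $\phi(I)$. That is, for arbitrary $a \in \phi(I)$, $x \in \phi(K)$, and any Borel set $E \subset \phi(K)$ with $E \cap \phi(I) = \emptyset$, I will show that $(p_x * p_a)(E) = 0$.

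Fix such $a,x,E$ and choose preimages $a_0 \in I$, $x_0 \in K$ with $\phi(a_0)=a$, $\phi(x_0)=x$. The key tool is Definition~\ref{homodef} applied to the Borel indicator function $f = \chi_E$ on $H$ (or equivalently on $\phi(K)$, extended by zero). Using Jewett's convention $f(u*v) = \int_K f \, d(p_u*p_v)$, the homomorphism identity $f\circ \phi(x_0 * a_0) = f(\phi(x_0)*\phi(a_0))$ rewrites as
\begin{equation*}
(p_x * p_a)(E) \;=\; \int_H \chi_E \, d(p_{\phi(x_0)}*p_{\phi(a_0)}) \;=\; \int_K \chi_E\circ\phi \, d(p_{x_0}*p_{a_0}) \;=\; (p_{x_0}*p_{a_0})(\phi^{-1}(E)).
\end{equation*}

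Now observe the elementary set-theoretic fact that $\phi^{-1}(E) \cap I = \emptyset$: if $z \in I$ then $\phi(z) \in \phi(I)$, which is disjoint from $E$, so $z \notin \phi^{-1}(E)$. Moreover $\phi^{-1}(E)$ is Borel in $K$ since $\phi$ is continuous. Hence the left ideal property of $I$ in $K$ gives $(p_{x_0}*p_{a_0})(\phi^{-1}(E)) = 0$, and combining with the displayed equation above yields $(p_x * p_a)(E) = 0$, as required.

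The only genuinely delicate point is the measurability issue: since $\phi$ is merely continuous (not necessarily open or closed), the image $\phi(I)$ need not be Borel in $H$, so working with $(p_x*p_a)(\phi(I))$ directly could be problematic. The strategy above circumvents this obstacle by testing the ideal condition against Borel sets $E$ in the complement of $\phi(I)$, which is exactly what the equivalent formulation of the definition permits; the remaining computation is then a clean translation through the homomorphism identity. I expect this measurability subtlety to be the only real issue worth flagging, and by using the equivalent Borel-set formulation of the ideal definition, it dissolves.
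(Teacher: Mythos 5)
Your proof is correct, and it takes a cleaner route than the paper's own argument, even though both rest on the same engine, namely the homomorphism identity $\int_H f\,d(p_{\phi(x)}*p_{\phi(a)})=\int_K f\circ\phi\,d(p_x*p_a)$ applied to an indicator function. The paper evaluates $(p_{\phi(x)}*p_{\phi(a)})(\phi(I))$ directly: it applies the identity to the indicator of the complement of $\phi(I)\cap B$ in $B:=supp(p_{\phi(x)}*p_{\phi(a)})$, and must then invoke Lemma \ref{lemma1} to argue that almost every $z\in supp(p_x*p_a)$ has $\phi(z)\in B$, so that $f\circ\phi$ agrees almost everywhere with the indicator of the complement of $\phi^{-1}\phi(I)$; it also integrates over $\phi(I)\cap B$, implicitly treating $\phi(I)$ as a measurable set. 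You dualize instead: testing against Borel sets $E$ disjoint from $\phi(I)$ gives $\chi_E\circ\phi=\chi_{\phi^{-1}(E)}$ \emph{exactly}, with $\phi^{-1}(E)$ automatically Borel and disjoint from $I$, so the left-ideal property of $I$ finishes the argument with no almost-everywhere bookkeeping. This buys you independence from Lemma \ref{lemma1} and dissolves the measurability issue for $\phi(I)$ that you rightly flag. One cosmetic point: a set $E$ that is Borel in the subspace $\phi(K)$ need not be Borel in $H$ if $\phi(K)$ itself is not; but writing $E=F\cap\phi(K)$ with $F$ Borel in $H$ and noting $\phi^{-1}(F)=\phi^{-1}(E)$, your computation applied to $\chi_F$ yields $(p_x*p_a)(F)=0$, which covers $E$ as well. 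The proof stands as written.
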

\begin{proof}
Pick any $a\in I$, $x\in K$. Set $B:= supp(p_{\phi(x)}*p_{\phi(a)})$ and define a function $f$ on $H$ such that $f\equiv 1$ on $\phi(I)\cap B$ and $f\equiv 0$ elsewhere. Then we have
\begin{eqnarray*}
p_{\phi(x)}*p_{\phi(a)}(\phi(I)) = \int_{\phi(I)\cap B} 1 \ d(p_{\phi(x)}*p_{\phi(a)})&=& \int_H f \ d(p_{\phi(x)}*p_{\phi(a)})\\
&=& f(\phi(x) * \phi(a))\\
&=& f\circ \phi (x*a)\\
&=& \int_K (f\circ \phi) \ d(p_x*p_a)\\
&=& \int_{\phi^{-1}\phi(I)}1 \ d(p_x*p_a)\\
&=& (p_x*p_a)(\phi^{-1}\phi(I))= (p_x*p_a) (I) = 1.
\end{eqnarray*}
\noi where the sixth equality follows from the fact that for almost all $z\in supp(p_x*p_a)$ Lemma \ref{lemma1} gives us that $\phi(z)\in B$ and hence $f(\phi(z)) = 0$ whenever $\phi(z)$ does not lie in $\phi(I)$, \textit{i.e,} whenever $z$ lies outside $\phi^{-1}\phi(I)$. Also, the second last equality follows since $I\subset \phi^{-1}\phi(I)$ and $supp(p_x*p_a) \subset I$ as $I$ is a left ideal in $K$.
\end{proof}

\begin{proposition}
Let $\phi: K \ra H$ be a homomorphism and $J\subset H$ a left ideal in $H$. Then $\phi^{-1}(J)$ is also a left ideal in $K$.
\end{proposition}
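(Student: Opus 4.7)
The plan is to prove this directly from the definition of a left ideal by choosing a carefully selected test function and invoking the homomorphism identity. The central observation is that membership in a set can be detected by integrating its characteristic function, and the homomorphism property lets us transfer such an integral between $K$ and $H$.

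First, I would unpack what needs to be shown. Pick an arbitrary $a \in \phi^{-1}(J)$ and $x \in K$. The goal is to verify that $p_x * p_a(\phi^{-1}(J)) = 1$. Since $J$ is closed in $H$, the set $\phi^{-1}(J)$ is a Borel (in fact closed) subset of $K$ by continuity of $\phi$, so $\chi_{\phi^{-1}(J)}$ is Borel measurable. Note that $\phi(a) \in J$ by assumption, so from $J$ being a left ideal in $H$ we already have $p_{\phi(x)} * p_{\phi(a)}(J) = 1$.

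Next, I would apply the homomorphism property of Definition \ref{homodef} to the Borel measurable function $f := \chi_J$ on $H$. The key chain of equalities is
\begin{eqnarray*}
p_x * p_a(\phi^{-1}(J)) &=& \int_K \chi_{\phi^{-1}(J)} \ d(p_x * p_a) \\
&=& \int_K (f \circ \phi) \ d(p_x * p_a) \\
&=& f\circ\phi(x*y) \\
&=& f(\phi(x)*\phi(a)) \\
&=& \int_H f \ d(p_{\phi(x)} * p_{\phi(a)}) \ = \ p_{\phi(x)} * p_{\phi(a)}(J) \ = \ 1,
\end{eqnarray*}
where the identity $\chi_{\phi^{-1}(J)} = \chi_J \circ \phi$ is just the definition of preimage, and the middle two equalities are precisely the homomorphism condition applied to $f = \chi_J$. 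This shows that $\phi^{-1}(J)$ satisfies the defining property of a left ideal, completing the proof.

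I do not expect a genuine obstacle here, since the entire argument is just the homomorphism identity applied to the characteristic function of $J$; the only minor subtlety is ensuring Borel measurability of $\chi_J$, which follows from the closedness of $J$ (and the convention that ideals are assumed closed). The proof is in fact shorter and cleaner than the previous proposition about $\phi(I)$, because here we do not need to invoke Lemma \ref{lemma1} — there is no issue about whether preimages of supports land in the right set.
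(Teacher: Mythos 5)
Your proof is correct and follows essentially the same route as the paper's: apply the homomorphism identity of Definition \ref{homodef} to $f=\chi_J$ and read off $p_x*p_a(\phi^{-1}(J)) = p_{\phi(x)}*p_{\phi(a)}(J)=1$; your observation that the pointwise identity $\chi_J\circ\phi=\chi_{\phi^{-1}(J)}$ makes Lemma \ref{lemma1} unnecessary here is accurate (the paper does invoke that lemma, but redundantly, since the relevant equality of integrals is pure set theory). The only blemish is a typo: $f\circ\phi(x*y)$ in your displayed chain should read $f\circ\phi(x*a)$.
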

\begin{proof}
Pick any $a\in \phi^{-1}(J)$, $x\in K$. Then $\phi(a)\in J$ and hence $supp(p_{\phi(x)}*p_{\phi(a)})\subset J$ and $(p_{\phi(x)}*p_{\phi(a)}) (J) =1$. Define a function $f$ on $H$ by $f\equiv 1$ on $J$ and $f\equiv 0$ elsewhere.

\noi Now for almost all $z\in supp(p_x*p_a)$ using Lemma \ref{lemma1} we have that 
$$\phi(z)\in supp(p_{\phi(x)}*p_{\phi(a)}) \subset J.$$
Hence $f\circ \phi(z) =1$ for almost all $z\in supp(p_x*p_a)$. Hence the result follows as
\begin{eqnarray*}
(p_x*p_a) (\phi^{-1}(J)) = \int_{\phi^{-1}(J)} 1 \ d(p_x*p_a) &=& \int_K f\circ \phi \ d(p_x*p_a)\\
&=& f\circ\phi(x*a)\\
&=&f(\phi(x)*\phi(a))\\
&=& \int_H f \ d(p_{\phi(x)}*p_{\phi(a)})\\
&=& \int_J 1 \ d(p_{\phi(x)}*p_{\phi(a)}) = (p_{\phi(x)}*p_{\phi(a)}) (J) = 1.
\end{eqnarray*}
\end{proof}

\noi A left (resp. right) ideal $I\subset K$ is called a minimal left (resp. right) ideal of $K$ if $I$ does not contain any proper left (resp. right) ideal of $K$. An ideal $I$ of $K$ which is both a minimal left and right ideal, is called a minimal ideal of $K$. We now start off with examining some equivalence criteria for the minimality of a left ideal. Next we examine some basic properties of minimal left (resp. right) ideals that hold trivially for semigroups, for reasons explained in the previous section.

\begin{proposition} \label{iprop0}
For any left ideal $I\subset K$ the following are equivalent:
\begin{enumerate}
\item $I$ is a minimal left ideal.
\item $K*\{a\} = I$ for any $a\in I$.
\item $I*\{a\} = I$ for any $a\in I$.
\end{enumerate}
\end{proposition}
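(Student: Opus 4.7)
My plan is a cyclic proof $(1) \Rightarrow (2) \Rightarrow (3) \Rightarrow (1)$. The recurring observation used throughout is that for any $a \in I$, both $K*\{a\}$ and $I*\{a\}$ are nonempty subsets of $I$: each $supp(p_y*p_a)$ is nonempty by axiom (A3), and since $I$ is a left ideal containing $a$, every such support is contained in $I$.

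For $(1) \Rightarrow (2)$: Proposition \ref{iex} already shows that $K*\{a\}$ is a left ideal of $K$. Being a nonempty left ideal contained in the minimal ideal $I$, it must equal $I$.

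For $(3) \Rightarrow (1)$: let $J \subset I$ be a left ideal and pick $a \in J \subset I$. Since $a \in J$ and $J$ is a left ideal, $supp(p_y*p_a) \subset J$ for every $y \in K$, so $I*\{a\} \subset K*\{a\} \subset J$. Hypothesis $(3)$ then forces $I = I*\{a\} \subset J$, hence $J=I$.

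The substantive step is $(2) \Rightarrow (3)$, and it is where I expect the main work to lie. Fix $a \in I$; the inclusion $I*\{a\} \subset I$ is immediate from the opening remark. For the reverse inclusion, pick any $b \in I*\{a\}$, so $b \in \{y\}*\{a\}$ for some $y \in I$. In particular $b \in I$, and applying hypothesis $(2)$ to $b$ gives $K*\{b\} = I$. On the other hand, imitating the subset-convolution manipulation in the proof of Proposition \ref{iex}, for every $x \in K$
$$\{x\}*\{b\} \;\subset\; \{x\}*(\{y\}*\{a\}) \;=\; (\{x\}*\{y\})*\{a\} \;\subset\; I*\{a\},$$
where the last inclusion uses $\{x\}*\{y\} \subset I$ (because $y \in I$ and $I$ is a left ideal). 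Taking the union over $x \in K$ gives $K*\{b\} \subset I*\{a\}$, so $I = K*\{b\} \subset I*\{a\}$, which completes the equivalence. The only delicate point is the standing convention that ideals are closed, so strictly speaking each algebraic set above should be replaced by its closure; but all the inclusions survive this operation and the argument is unaffected.
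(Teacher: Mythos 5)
Your proof is correct, and all three implications go through with the same basic ingredients the paper uses (associativity of set convolution, the fact that $K*\{a\}$ and $I*\{a\}$ are left ideals sitting inside $I$, and minimality). The difference is purely in the orientation of the cycle: you prove $(1)\Rightarrow(2)\Rightarrow(3)\Rightarrow(1)$, while the paper proves $(1)\Rightarrow(3)\Rightarrow(2)\Rightarrow(1)$. That reversal relocates all the work. In the paper's ordering every step is essentially one line: $(1)\Rightarrow(3)$ applies minimality to the left ideal $I*\{a\}$ (left ideal because $K*(I*\{a\})=(K*I)*\{a\}\subset I*\{a\}$); $(3)\Rightarrow(2)$ is the sandwich $I=I*\{a\}\subset K*\{a\}\subset I$; and $(2)\Rightarrow(1)$ is immediate. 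In your ordering the middle implication $(2)\Rightarrow(3)$ is the one that is not formally trivial, and your argument for it --- pull $b\in\{y\}*\{a\}$ with $y\in I$, use $\{x\}*\{b\}\subset(\{x\}*\{y\})*\{a\}\subset I*\{a\}$, take the union over $x$, and invoke $(2)$ at $b$ to get $I=K*\{b\}\subset I*\{a\}$ --- is a genuinely nice piece of reasoning, but it is work the paper's route avoids entirely. Your closing remark about closures is well taken and applies equally to the paper's own manipulations (the standing convention is that ideals are closed, and set convolutions are only associative up to closure), so neither argument is harmed by it. If you want the shortest exposition, reverse your cycle; as written, your proof is valid but does the hardest step by hand.
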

\begin{proof}
\noi $(1) \Rightarrow (3)$: Let $I$ be a minimal left ideal of $K$ and $a\in I$. Then $I*\{a\} \subset I*I\subset I$ since $I$ is a left ideal. Also, $I*\{a\}$ is a left ideal since $K*(I*\{a\}) = (K*I)*\{a\} \subset I*\{a\}$.

\noi Hence the minimality of $I$ gives us that $I*\{a\} =I$.

\vspace{0.05cm}

\noi $(3) \Rightarrow (2)$: Since $I$ is a left ideal, for each $a\in I$ we have $I=I*\{a\} \subset K*\{a\} \subset I$, which forces that $K*\{a\} = I$.

\vspace{0.05cm}

\noi $(2) \Rightarrow (1)$: Let $J$ be a left ideal contained in $I$. Pick any $b\in J$. Since $b\in I$ as well, (2) gives us $I= K*\{b\} \subset K*J \subset J$. Therefore $I=J$ implying the minimality of $I$ in $K$.
\end{proof}

\begin{corollary} \label{iprop1}
For any (semitopological) semihypergroup $K$ the following assertions hold.
\begin{enumerate}
\item If $I_1, I_2$ are minimal left ideals in $K$, then either $I_1=I_2$ or $I_1\cap I_2 = \O$.
\item Let $I$ be a minimal left ideal in $K$. Then any minimal left ideal in $K$ will be of the form $I*\{x\}$ for some $x\in K$. Moreover, we have that $K*J =J$ for any minimal left ideal $J$ in $K$.
\item $K$ can have at most one minimal ideal, namely the intersection of all  ideals in $K$.
\end{enumerate}
\end{corollary}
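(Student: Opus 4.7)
The plan is to reduce each of the three assertions to the characterizations of minimality given in Proposition \ref{iprop0}, specifically to the fact that a left ideal $I$ is minimal exactly when $K*\{a\}=I$ (equivalently $I*\{a\}=I$) for every $a\in I$.

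For assertion (1), if $a\in I_1\cap I_2$ then Proposition \ref{iprop0} applied to each of $I_1$ and $I_2$ at the common point $a$ yields $I_1=K*\{a\}=I_2$. Thus the only alternative to $I_1\cap I_2=\O$ is $I_1=I_2$.

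For assertion (2), given a minimal left ideal $J$ and any $x\in J$, I would first verify that $I*\{x\}$ is a nonempty left ideal of $K$: associativity gives $K*(I*\{x\})=(K*I)*\{x\}\subset I*\{x\}$ because $I$ is a left ideal, and the set is nonempty since each $\mathrm{supp}(p_y*p_x)$ is nonempty. Next, applying Proposition \ref{iprop0}(2) to the minimal left ideal $J$ at $x$ gives $K*\{x\}=J$, so $I*\{x\}\subset K*\{x\}=J$. Minimality of $J$ then forces $I*\{x\}=J$, proving the first claim. The identity $K*J=J$ follows by noting that $K*J\subset J$ since $J$ is a left ideal, while conversely $J=K*\{x\}\subset K*J$ for any fixed $x\in J$.

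For assertion (3), I would exploit the two-sided nature of (minimal) ideals. If $M_1$ and $M_2$ are minimal ideals of $K$, then using that $M_1$ is a right ideal and $M_2$ is a left ideal, $M_1*M_2\subset M_1\cap M_2$, and this latter set is nonempty because every $\mathrm{supp}(p_x*p_y)$ is nonempty. Moreover $M_1\cap M_2$ is itself an ideal: $K*(M_1\cap M_2)\subset K*M_1\cap K*M_2\subset M_1\cap M_2$, and symmetrically on the right. Since $M_1\cap M_2\subset M_i$ for each $i$, minimality of $M_1$ and $M_2$ forces $M_1=M_1\cap M_2=M_2$. Applying the same reasoning to the (now unique) minimal ideal $M$ and any ideal $I$ of $K$ shows $M\cap I=M$, so $M\subset I$; hence $M$ coincides with the intersection of all ideals of $K$. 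The only subtle point is the essential use of two-sidedness — if one only had left ideals one could not rule out $M_1\cap M_2=\O$, which is precisely why part (1) allows disjointness for minimal left ideals whereas (3) does not for minimal two-sided ideals.
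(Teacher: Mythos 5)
Your proposal is correct and follows essentially the same route as the paper: everything is reduced to the characterization of minimality in Proposition \ref{iprop0} together with the fact that each $\mathrm{supp}(p_x*p_y)$ is nonempty. The paper only writes out part (2) (via $I*\{y\}\subset I*J\subset J$ rather than your $I*\{x\}\subset K*\{x\}=J$, a trivial variation) and declares (1) and (3) straightforward; your write-up simply supplies those omitted details, including the correct observation that two-sidedness is what rules out disjointness in (3).
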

\begin{proof}
The proofs of $(1)$ and $(3)$ are straight-forward. To prove $(2)$: let $J$ be any minimal left ideal of $K$. Pick $y\in J$. Then $I*\{y\} \subset I*J\subset J$ and the minimality of $J$ forces that $J=I*\{y\}$. Now pick any $x_0\in K$ and consider the left ideal $I_0 := I*\{x_0\}$. It follows readily from Proposition \ref{iprop0} that $K*I = \cup_{a\in I} K*\{a\} = \cup_{a\in I} I =I$. Therefore $K*I_0 = K*(I*\{x_0\}) = (K*I)*\{x_0\} = I*\{x_0\}=I_0$.
\end{proof}

\begin{remark} \label{idealdevi}
Unlike in the semigroup setting \cite{GL1, GL2}, the family of sets $\{I*\{x\} : x\in K\}$ does not serve as an exhaustive family of minimal left ideals for a semihypergroup, where $I$ is a minimal left ideal in $K$. For any $x\in K$ the set $I*\{x\}$ is of course a left ideal of $K$, but it need not be minimal. For example, simply consider the finite semihypergroup $(K,*)$ where $K=\{a, b, c\}$ and the operation $*$ given in the following table. 
\begin{center}
  \begin{tabular}{ c | c  c  c } \label{ext}
    $*$ & $p_a$ & $p_b$ & $p_c$ \\
    \hline
    $p_a$ & $p_a$ & $\frac{1}{2}(p_a+p_b)$ & $\frac{1}{2}(p_a+p_c)$ \\ 
    $p_b$ & $p_a$ & $\frac{1}{2}(p_a+p_b)$ & $\frac{1}{2}(p_a+p_c)$ \\
    $p_c$ & $p_a$ & $p_b$ & $p_c$ \\
  \end{tabular}
  \end{center}

\noi Set $I =\{a\}$. Then $I$ is a minimal left ideal as the left multiplication by $*$ leaves $p_a$ fixed. But $I*\{b\}=\{a, b\}$ and $I*\{c\} = \{a, c\}$ are both left ideals that fail to be minimal.
\end{remark}

\begin{proposition} \label{ithm0}
Let $K$ be a compact semitopological semihypergroup. Then each left ideal of $K$ contains at least one minimal left ideal and each right ideal contains at least one minimal right ideal of $K$.

\vspace{0.05cm}

\noi Moreover, each minimal left and right ideal of $K$ is closed.
\end{proposition}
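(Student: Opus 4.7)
The plan is to prove both parts by combining a Zorn's lemma argument with the closedness of sets of the form $K*\{a\}$, the latter being a consequence of axiom \textbf{(A4)} together with the compactness of $K$. Throughout, I will use that every closed subset of $K$ is compact.

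To obtain a minimal left ideal inside a given (closed) left ideal $I \subset K$, I would consider the family $\mathcal{F}$ of all non-empty closed left ideals of $K$ contained in $I$, partially ordered by reverse inclusion. The family is non-empty since $I \in \mathcal{F}$. For any chain $\{J_\alpha\} \subset \mathcal{F}$, the intersection $J := \bigcap_\alpha J_\alpha$ is non-empty by the finite intersection property (each $J_\alpha$ is closed and hence compact in $K$, and any finite subcollection of the chain has non-empty intersection by nesting), is closed, and is a left ideal: for $a \in J$ and $x \in K$, $\mathrm{supp}(p_x * p_a) \subset J_\alpha$ for every $\alpha$, and hence lies in $J$. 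Thus Zorn's lemma produces a maximal element $L \in \mathcal{F}$, that is, a closed left ideal of $K$ which is minimal among closed left ideals contained in $I$.

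To promote $L$ to a genuine minimal left ideal, I would fix any $a \in L$ and observe that by Proposition \ref{iex} the set $K*\{a\}$ is a left ideal of $K$, clearly contained in $L$. The crucial point is that $K*\{a\}$ is closed in $K$: the map $\phi: K \to \mathfrak{C}(K)$ defined by $\phi(x) := \mathrm{supp}(p_x * p_a)$ is continuous by axiom \textbf{(A4)}, so $\phi(K)$ is a compact subfamily of $\mathfrak{C}(K)$ under the Michael topology, and a standard property of that topology on a Hausdorff space ensures that $K*\{a\} = \bigcup_{x \in K} \mathrm{supp}(p_x * p_a)$ is closed in $K$. Hence $K*\{a\} \in \mathcal{F}$, and the minimality of $L$ forces $K*\{a\} = L$ for every $a \in L$. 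By the equivalence $(2) \Leftrightarrow (1)$ of Proposition \ref{iprop0}, $L$ is a minimal left ideal of $K$. The analogous argument, replacing left translations by right ones, yields minimal right ideals inside any right ideal.

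For the \emph{moreover} assertion, I would take any minimal left ideal $M$ of $K$ (without assuming $M$ closed a priori) and any $a \in M$. Then $K*\{a\}$ is a left ideal by Proposition \ref{iex}, contained in $M$ because $a \in M$ and $M$ is a left ideal, and closed by the same Michael-topology argument as above. Minimality of $M$ forces $M = K*\{a\}$, so $M$ is closed. The anticipated obstacle in the whole argument is precisely this closedness of $K*\{a\}$: unlike the semigroup setting, where $Ka$ is trivially the continuous image of compact $K$ under a single-valued left translation, here one must pass through the hyperspace $\mathfrak{C}(K)$ and invoke \textbf{(A4)} together with the behaviour of the Michael topology on compact families of compact sets.
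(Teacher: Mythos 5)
Your proposal is correct and follows essentially the same route as the paper: Zorn's lemma on the family of closed left ideals contained in $I$ under reverse inclusion, with compactness guaranteeing non-empty chain intersections, and closedness of the resulting minimal ideal obtained from the identity $L = K*\{a\}$. The only differences are expository refinements --- you spell out the finite intersection property, justify the closedness of $K*\{a\}$ via axiom \textbf{(A4)} and the Michael topology (where the paper simply cites Jewett), and explicitly bridge ``minimal among closed left ideals'' to ``minimal left ideal'' via Proposition \ref{iprop0}, a point the paper handles implicitly through its standing convention that all ideals are closed.
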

\begin{proof}
We will prove the statement only for minimal left ideals.

\noi Let $I$ be a left ideal of $K$. Consider the following collection of left ideals in $K$ 
$$ \mathcal{Q} := \{J \subset K : J \mbox{ is a left ideal in } K \mbox{ and } J \subset I\}.$$
If $a \in I$, then $K*\{a\}\subset K* I \subset I$ is a left ideal in $K$, by Proposition \ref{iex}. Hence $\mathcal{Q}$ is non-empty and non-trivial. Equip $\mathcal{Q}$ with the partial order of reverse inclusion. Let $\mathcal{C}$ be a linearly ordered sub-collection of $\mathcal{Q}$. Since $K$ is compact, the ideal $\cap_{J\in \mathcal{C}} \ J$ is non-empty. Hence we can use Zorn's Lemma to deduce that there exists a minimal element $J_0$ in $\mathcal{Q}$, which serves as a minimal left ideal of $K$ contained in $I$.

\noi Now let $a\in J_0$. Then $K*\{a\}$ is a left ideal in $K$ contained in $J_0$. Hence by minimality of $J_0$ we get that $J_0 = K*\{a\}$, which is closed as $K$ is compact \cite{JE}.
\end{proof}

\noi Finally in the last part of this section, we define and explore the characteristics of the kernel of a (semitopological) semihypergroup. We restrict ourselves to the case where the underlying space is compact and investigate the interplay between the structures of a kernel and the exhaustive set of minimal left (resp. right) ideals. We conclude with a result outlining the relationship between amenability and the existence of a unique minimal left ideal for a compact semihypergroup.

\vspace{0.03in}

\noi The kernel of a (semitopological) semihypergroup $K$ denoted as $Ker(K)$ is defined to be the intersection of all (two-sided) ideals in $K$, \textit{i.e,} we define
\begin{equation*}
 Ker(K) := \bigcap_{\substack{I \subset K  \\
                     \mbox{ is an ideal }}}
                      I .
\end{equation*}
The set of all minimal left and right ideals of $K$ are denoted as $\mathcal{L}(K)$ and $\mathcal{R}(K)$ respectively. Finally, the following theorems give us an explicit idea of the structure of the kernel of a compact semihypergroup.

\begin{theorem} \label{ithm1}
Let $K$ be a compact semitopological semihypergroup and $I$ be a minimal left ideal in $K$. Then we have that
$$\bigcup_{M\in \mathcal{L}(K)} \hspace{-0.07in} M \subset Ker(K) \subset \bigcup_{x\in K}  (I * \{x\}) $$
\end{theorem}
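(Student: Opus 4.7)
The plan is to establish the two containments separately.

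For the first containment $\bigcup_{M \in \mathcal{L}(K)} M \subset Ker(K)$, I fix an arbitrary minimal left ideal $M$ and an arbitrary closed two-sided ideal $J$ of $K$, and aim to show $M \subset J$; the result will then follow by intersecting over all such $J$. Pick any $a \in M$ and $j \in J$: the non-empty set $\operatorname{supp}(p_j * p_a) = \{j\}*\{a\}$ lies in $M$ (since $M$ is a left ideal and $j \in K$) and simultaneously in $J$ (since $J$ is a right ideal and $a \in K$). Hence $M \cap J$ is a non-empty closed left ideal contained in $M$, and the minimality of $M$ forces $M \cap J = M$, i.e., $M \subset J$.

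For the second containment $Ker(K) \subset \bigcup_{x \in K}(I*\{x\})$, I set $J := \bigcup_{x \in K}(I * \{x\}) = I * K$ and verify that $J$ is itself a closed two-sided ideal; the desired containment then follows directly from the definition of $Ker(K)$. Two-sidedness follows from the set-level associativity of $*$, which itself descends from associativity of $*$ on $M(K)$:
$$K*J \;=\; (K*I)*K \;\subset\; I*K \;=\; J \quad\text{and}\quad J*K \;=\; I*(K*K) \;\subset\; I*K \;=\; J,$$
using that $I$ is a left ideal (so $K*I \subset I$) and that $K*K \subset K$. For closedness of $J = I*K$ inside the compact space $K$, I combine the closedness of $I$ (Proposition \ref{ithm0}, so $I$ is compact) with the standard fact, derivable from axiom (A4) and already established in \cite{JE}, that the convolution of two compact subsets of a semihypergroup is again compact.

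The only genuine technical point is the closedness of $I*K$, which relies essentially on the continuity of the support map $(x,y) \mapsto \operatorname{supp}(p_x*p_y)$ from axiom (A4) together with compactness of $I$ and $K$; once this is in hand, both inclusions reduce to short formal manipulations using the defining properties of left and right ideals and the subset-level associativity of $*$.
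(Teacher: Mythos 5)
Your proof is correct, and it is worth recording how it differs from the paper's. For the inclusion $\bigcup_{M\in\mathcal{L}(K)}M\subset Ker(K)$ the paper argues via the product set: $J*M$ is a left ideal (since $K*(J*M)=(K*J)*M\subset J*M$) contained in $M$, so minimality gives $M=J*M\subset J$. You instead use the intersection $M\cap J$, first checking it is non-empty by observing that $supp(p_j*p_a)$ for $j\in J$, $a\in M$ lands in both sets; minimality then gives $M\cap J=M$, i.e.\ $M\subset J$. Both arguments are equally short and both rest on producing a left ideal inside $M$; yours needs the extra non-emptiness step, the paper's needs the set-level associativity. For the inclusion $Ker(K)\subset I*K$ your route is essentially the paper's: both verify that $I*K=\bigcup_{x\in K}I*\{x\}$ is a two-sided ideal by associativity of the set convolution. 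The one place where your write-up is actually more careful than the paper's is the closedness of $I*K$: since the section's convention is that $Ker(K)$ is the intersection of \emph{closed} two-sided ideals, one does need $I*K$ closed before concluding $Ker(K)\subset I*K$, and your appeal to the compactness of $I$ (via Proposition \ref{ithm0}) together with Jewett's fact that the convolution of compact sets is compact supplies exactly the step the paper leaves implicit. No gaps.
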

\begin{proof}
Let $I\in \mathcal{L}(K)$ and consider the following union
$$I_0 := \cup_{x\in K} I * \{x\} .$$
We know from Corollary \ref{iprop1} that $I_0$ is a union of left ideals and hence is a left ideal of $K$ itself. Now pick any $a\in I_0$. There exists some $x_0\in K$ such that $a\in I*\{x_0\}$. Thus for any $y\in K$ we have
\begin{eqnarray*}
supp(p_a*p_y) = \{a\}*\{y\} &\subset & (I*\{x_0\})*\{y\} \\
& = & I* (\{x_0\}*\{y\})\\
& = & \cup_{x\in \{x_0\}*\{y\}} \ (I*\{x\}) \subset \cup_{x\in K} \ (I*\{x\}) = I_0.
\end{eqnarray*}
\noi Hence $I_0$ is a right ideal of $K$ as well and therefore $Ker(K) \subset I_0$.

\noi Now let $J$ be any ideal in $K$ and $M\in \mathcal{L}(K)$. Then 
$$K*(J*M) = (K*J)*M \subset J*M \subset M .$$
Thus $J*M$ is a left ideal of $K$ contained in $M$ and hence by minimality of $M$, we have that $M=J*M$. But $J$ is a two-sided ideal and hence $M= J*M \subset J$.

\noi This is true for any $M\in \mathcal{L}(K)$ and any ideal $J$ in $K$, and hence finally we see that $\cup_{I\in \mathcal{L}(K)} \ I \subset Ker(K)$ as required.
\end{proof}

\noi The result holds true similarly for minimal right ideals as well, \textit{i.e,} for any minimal right ideal $J\in \mathcal{R}(K)$ we have that 
$$\bigcup_{N\in \mathcal{R}(K)} \hspace{-0.07in} N \subset Ker(K) \subset \bigcup_{x\in K}  (\{x\}*J) $$

\begin{corollary}\label{icor1}
Let $K$ be a compact semitopological semihypergroup. Then $Ker(K)$ is non-empty.
\end{corollary}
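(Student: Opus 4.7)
The plan is to derive the non-emptiness of $Ker(K)$ directly from the two structural results just established: Proposition \ref{ithm0} (existence of minimal left ideals inside any left ideal of a compact semitopological semihypergroup) and the first inclusion in Theorem \ref{ithm1}.

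First I would observe that $K$ is itself a (closed) left ideal of $K$, so the class $\mathcal{L}(K)$ of minimal left ideals is non-empty: applying Proposition \ref{ithm0} to the left ideal $I = K$ produces at least one minimal left ideal $M \subset K$. Since $M$ is a minimal left ideal it is non-empty by definition (any left ideal $M$ satisfies $p_x*p_a(M) = 1$ for $a\in M$, $x\in K$, which implicitly requires $M$ to contain an element).

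Next I would invoke Theorem \ref{ithm1}, which gives the inclusion
\[
\bigcup_{M\in\mathcal{L}(K)} M \;\subset\; Ker(K).
\]
Combined with the previous paragraph, this yields $\emptyset \neq M \subset Ker(K)$, completing the proof.

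I do not anticipate any real obstacle here: the entire content of the corollary is already packaged inside the preceding two results, and the argument is essentially one line of bookkeeping. The only minor point worth making explicit in the write-up is the remark that $K$ qualifies as a left ideal of itself, so that Proposition \ref{ithm0} applies and guarantees $\mathcal{L}(K) \neq \emptyset$ in the compact setting.
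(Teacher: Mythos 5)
Your argument is correct and is essentially the paper's own proof: both rely on Proposition \ref{ithm0} to produce a minimal left ideal in the compact setting and then on the first inclusion of Theorem \ref{ithm1} to place it inside $Ker(K)$. The only cosmetic difference is that you start from the left ideal $K$ itself, whereas the paper starts from the left ideal $K*\{a\}$ supplied by Proposition \ref{iex}; either choice works.
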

\begin{proof}
We know by Proposition \ref{iex} that $K*\{a\}$ is a left ideal for any $a\in K$. Since $K$ is compact, it follows from Proposition \ref{ithm0} that it contains at least one minimal left ideal. Hence the result follows immediately from the above theorem.
\end{proof}

\noi Note that the above Corollary only implies that $K$ does not contain any two disjoint ideals. But it may very well be the case that $Ker(K) = K$ as demonstrated in Remark \ref{ext} and Example \ref{ex2}.

\begin{theorem} \label{ithm3}
Let $K$ be a compact semihypergroup. If $K$ is right amenable, \textit{i.e,} if $C(K)$ admits a right invariant mean, then $K$ has a unique minimal left ideal.
\end{theorem}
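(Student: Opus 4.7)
The plan is to argue by contradiction: suppose $K$ admits two distinct minimal left ideals $L_1$ and $L_2$, and use a right invariant mean to derive a contradiction via a suitably chosen continuous function that distinguishes the two ideals.

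First I would invoke Corollary \ref{iprop1}(1) to conclude that the two distinct minimal left ideals must satisfy $L_1\cap L_2=\emptyset$, and invoke Proposition \ref{ithm0} to see that each $L_i$ is closed, hence compact. Since $K$ is compact Hausdorff and therefore normal, I can apply Urysohn's lemma to produce a function $f\in C(K)$ with $0\le f\le 1$, $f\equiv 1$ on $L_1$ and $f\equiv 0$ on $L_2$.

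The key computation will be to evaluate the right translate $R_a f$ for $a\in L_1$ and for $a\in L_2$. Because $L_i$ is a closed left ideal, for every $x\in K$ and $a\in L_i$ the remark following the definition of a left ideal gives $\mbox{supp}(p_x*p_a)\subset L_i$. Combined with the fact that $p_x*p_a$ is a probability measure, this yields
\begin{equation*}
R_a f(x) \;=\; \int_K f\,d(p_x*p_a) \;=\; \int_{L_i} f\,d(p_x*p_a)
\;=\;\begin{cases} 1 & \text{if } a\in L_1,\\ 0 & \text{if } a\in L_2,\end{cases}
\end{equation*}
since $f$ is identically $1$ on $L_1$ and identically $0$ on $L_2$. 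Thus $R_a f\equiv 1$ when $a\in L_1$ and $R_a f\equiv 0$ when $a\in L_2$.

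Now let $m$ be a right invariant mean on $C(K)$, which exists by hypothesis. Picking any $a\in L_1$ and any $b\in L_2$ (nonempty by Corollary \ref{icor1} / Proposition \ref{ithm0}), right invariance gives $m(f)=m(R_a f)=m(1)=1$ and simultaneously $m(f)=m(R_b f)=m(0)=0$, which is the desired contradiction. Existence of at least one minimal left ideal follows from Proposition \ref{ithm0} applied to the left ideal $K$ itself, so uniqueness plus existence gives the result. I do not anticipate a serious obstacle here; the only subtlety is making sure the support condition for left ideals is applied in the closed form, which is guaranteed since minimal left ideals in a compact $K$ are automatically closed.
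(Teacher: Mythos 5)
Your proposal is correct and follows essentially the same route as the paper's proof: disjointness and closedness of two distinct minimal left ideals, Urysohn's lemma to separate them, the support condition $\mbox{supp}(p_x*p_a)\subset L_i$ to compute the right translates as constants, and right invariance of the mean to force $0=1$. The only (immaterial) difference is that you swap the roles of the two ideals in the choice of $f$.
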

\begin{proof}
\noi Let $m$ be a right invariant mean on $C(K)$. Since $K$ is compact, it follows from Proposition \ref{ithm0} that there exists at least one minimal left ideal of $K$.

\noi If possible, let $I_1$ and $I_2$ be two distinct minimal left ideals of $K$. Then by Corollary \ref{iprop1} and Proposition \ref{ithm0} we have that $I_1\cap I_2 = \varnothing$ and both $I_1$ and $I_2$ are closed in $K$. Since $K$ is compact, it is normal. Hence we can use Urysohn's Lemma to get a continuous function $f\in C(K)$ such that $f\equiv 0$ on $I_1$ and $f\equiv 1$ on $I_2$.

\noi Now pick $a\in I_1$. For any $x\in K$ we have that
$$R_af(x) = f(x*a) = \int_{supp(p_x*p_a)} f \ d(p_x*p_a) = 0$$
where the last equality follows since $supp(p_x*p_a)\subset I_1$ and hence $f\equiv 0$ on $supp(p_x*p_a)$.

\noi Similarly for any $b\in I_2$, $x\in K$ we have that
$$R_bf(x) = f(x*b) = \int_{supp(p_x*p_b)} f \ d(p_x*p_b) = (p_x*p_b)(K) =1$$
where as before, the fourth equality follows since $supp(p_x*p_b)\subset I_2$ and hence $f\equiv 1$ on $supp(p_x*p_b)$.

\noi Thus for any $a\in I_1$, $b\in I_2$ we have that $R_af \equiv 0$ and $R_bf \equiv 1$. This leads to a contradiction since 
$$ 1 = m(1) = m(R_bf) = m(f) = m(R_af) = m(0) .$$
Hence the minimal left ideal of $K$ must be unique.
\end{proof}


\section{Open Questions}

\noi As discussed briefly in the introduction, the lack of prior extensive research since its inception and the significant examples available in coset and orbit spaces of locally compact groups, Lie groups and homogeneous spaces, makes way to a number of exciting new avenues of research on semihypergroups. 

\noi The author has already shown (in an upcoming article) that a concept of free products can naturally be introduced to semihypergroups, giving rise to a new class of examples. What makes the structure more intriguing is the fact that the natural algebraic free-product structure, which does not work for topological groups, indeed works in this case for a vast class of semihypergroups, including nontrivial coset and orbit spaces.

\noi  Here we list some of the immediate potential problems and areas of semihypergroups that we are currently working on and/or intend to work on in the near future.

\vspace{0.03 in}

\noi \textbf{Problem 1:}  Use the algebraic structure imposed on $AP(K)^*$ to acquire a general compactification of semihypergroups.

\vspace{0.03in}

\noi \textbf{Problem 2:} Investigate the set of minimal ideals on a (semitopological) semihypergroup more closely, finally to explore its relation to the space of almost periodic and weakly almost periodic functions.

\vspace{0.03in}

\noi \textbf{Problem 3:} Investigate the idempotents of a compact semihypergroup and explore their relation to the space of minimal left ideals, kernel and amenability of a semihypergroup.

\vspace{0.03in}

\noi \textbf{Problem 4:} Explore if results equivalent to isomorphism theorems hold true for a semihypergroup, in addition to exploring the structure of the kernel of a homomorphism for a (semitopological) semihypergroup with identity, as in Example \ref{ex3} and Example \ref{ex4}.

\vspace{0.03in}

\noi \textbf{Problem 5:} Study different notions of amenability on (semitopological) semihypergroups, specially for the non-commutative case.

\vspace{0.1in}

\noi \textbf{Problem 6:} Investigate the F-algebraic structure (defined by A. T. Lau in \cite{LA2}) on the measure algebra of a semihypergroup.


\section{Acknowledgement}

\noi The author would like to thank her doctoral thesis advisor  Dr. Anthony To-Ming Lau for suggesting the topic of this paper and for the helpful discussions during the course of this work. She would also like to thank the referee for the valuable suggestions that led to a better presentation of the article.


\end{document}